\documentclass[12pt, reqno]{amsproc}%
 
\usepackage{graphicx}%

\usepackage[margin=1in]{geometry}
\usepackage{amsmath,amssymb,amsthm,mathtools}
\usepackage{enumitem}

\newtheorem{thm}{Theorem}[section]
\newtheorem{lem}{Lemma}[section]

\newcommand{\R}{\mathbb{R}}
\newcommand{\abs}[1]{\left\lvert #1\right\rvert}
\newcommand{\norm}[1]{\left\lVert #1\right\rVert}
\newcommand{\grad}{\nabla}

\usepackage[colorlinks,linkcolor=blue,citecolor=blue,pagebackref,hypertexnames=false, breaklinks]{hyperref}

\setcounter{MaxMatrixCols}{30}
 
\providecommand{\U}[1]{\protect\rule{.1in}{.1in}}

\marginparwidth -1cm \oddsidemargin 0cm \evensidemargin 0cm
\topmargin 0pt \textheight 220mm \textwidth 160mm

\theoremstyle{plain}

\newtheorem{rem}{Remark}[section]

\numberwithin{equation}{section}

\setlength{\oddsidemargin}{-0.08in}
\setlength{\textheight}{9.0in}
\setlength{\textwidth}{6.5in}
\setlength{\topmargin}{-0.5in}

\usepackage[notcite,notref, notshowkeys]{ }

\renewcommand{\tilde}{\widetilde}

\makeatletter
\@addtoreset{equation}{section}

\newcommand{\beq}[1]{\begin{equation} \label{#1}}
\newcommand{\eeq}{\end{equation}}
\newcommand{\bed}{\begin{displaymath}}
\newcommand{\eed}{\end{displaymath}}
\newcommand{\bea}{$$\begin{array}{ll}}
\newcommand{\eea}{\end{array}$$}

\newcommand{\barray}{\begin{array}{ll}}
\newcommand{\earray}{\end{array}}

\newcommand{\bna}{\begin{eqnarray*}}
\newcommand{\ena}{\end{eqnarray*}}

\title[ Trudinger-Moser inequalities on complete noncompact Riemannian manifolds]{ Optimal  Trudinger-Moser inequalities on complete noncompact Riemannian manifolds:  Revisit of the argument from the local inequalities to global ones}

\author{Jungang Li}
\address{Department of Mathematics\\
University of Science and Technology of China\\
Hefei 230026, ANHUI
 CHINA}
\email{jungangli@ustc.edu.cn}

\author{Guozhen Lu}
\address{Department of Mathematics\\
University of Connecticut\\
Storrs, CT 06269, USA}
\email{guozhen.lu@uconn.edu}

\begin{document}

\begin{abstract}
	
The main purpose of this short note, on the one hand,  is  to rigorize  some part of the proof of Theorem 1.3 in \cite{LiLu-AIM} using the level set argument in a simple way, and on the other hand, to give an alternative argument from local inequalities to global ones by using Gromov's covering lemma. This result holds on Riemannian manifolds with positive injective radius and Ricci lower bound as stated in \cite{LiLu-AIM}. Moreover, we show that the Green functions for the Dirichlet problem on any small geodesic balls (or harmonic coordinate charts) on Riemannian manifolds with additional condition that the sectional curvature is bounded  from above  have uniform estimates independent of the locations of the balls (or the charts). This will give another proof on such manifolds   the critical Trudinger-Moser inequality. This latter result of Green's function estimates is of its independent interest and can be found useful elsewhere.

\end{abstract}

	\maketitle
\section{Introduction}

The classical P\'olya-Szeg\"{o} symmetrization inequality plays a fundamental role in establishing geometric and functional inequalities in the Euclidean space. 
However, this symmetrization principle in general fails in many other non-Euclidean settings such as the Heisenberg group or Riemannian manifolds as well as on higher order Sobolev spaces even in the Euclidean space.

To circumvent this obstacle, 
Lam and Lu 
\cite{LamLu2, LamLu1}   developed a symmetrization-free method to prove  critical global Trudinger-Moser inequalities on the entire Heisenberg group, and   Adams inequalities on the higher order Sobolev spaces on the entire Euclidean spaces. 
Lam, Lu and Tang \cite{LamLuTang} establish the subcritical Trudinger-Moser inequality on the Heisenberg group.
The essence of these techniques is to establish the local inequalities on finite domains and then pass them to global inequalities using an argument of level set of functions under consideration. This kind of argument avoids using the P\'oly-Szeg\"{o} inequality and thus is also effective in proving the concentration-compactness principle in  settings where  such symmtrization principle is absent (\cite{LiLuZhu-CVPDE, LiLuZhu-ANS}). This argument has been extensively used by a number of authors and we only refer the reader to  
see also e.g. \cite{YangQ}, \cite{ChenLuZhu-PLMS}, \cite{LuTang-ANS1, LuTang-ANS2} for  adapations of this circle of ideas.  

This sort of ideas has also been used in our proofs of the critical Trudinger-Moser inequality (Theorem 1.3)  on complete and noncompact Riemannian manifolds \cite{LiLu-AIM}. To be precise, we established 

\begin{thm}\label{maintheorem}
		Let $(M, g)$ be a complete noncompact Riemannian manifold whose Ricci curvature is lower bounded, i.e. $\textit{Ric} \geq \lambda g$ for some constant $\lambda \in \mathbb{R}$. Moreover, we assume that its injectivity radius has a lower bound, i.e. $\textit{inj} (M,g) \geq i > 0$ for some constant $i > 0$. Then there exists a positive constant $\epsilon = \epsilon (\lambda , i , n) =O(\frac{1}{r_H})$ in the reverse order of the harmonic radius $r_H$ such that for every $\tau > \epsilon$, there exists a constant $C = C(n, \tau , M)$ such that 
		
		$$
		\sup_{u \in W^{1,n} (M) , ||u||_{1, \tau} \leq 1} \int_M \phi_n (\alpha_n |u|^{\frac{n}{n-1}}) dV_g \leq C. 
		$$ 
		Moreover, $\alpha_n$ is sharp.
	\end{thm}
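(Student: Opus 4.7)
The plan is to follow the two approaches previewed in the abstract, both of which rest on a uniform local Trudinger--Moser inequality on harmonic balls of radius $\sim r_H$ and then pass from the local estimate to the global one. I first establish the local step, and then describe two different globalizations: the rigorized level-set argument from \cite{LiLu-AIM} and the alternative Gromov-covering argument.

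\textbf{Local step.} For $p\in M$, I would work in a harmonic coordinate chart covering $B(p,r_H)$. In such a chart the metric components $g_{ij}$ are uniformly close to $\delta_{ij}$, with distortion controlled purely by $r_H$ (hence by $\lambda,i,n$), and the volume element is comparable to Lebesgue measure. Applying the Euclidean Trudinger--Moser inequality in the chart and transferring back yields
\begin{equation*}
\int_{B(p,r_H)} \phi_n\bigl(\alpha_n|u|^{n/(n-1)}\bigr)\, dV_g \leq C, \qquad u\in W_0^{1,n}(B(p,r_H)),\ \|\nabla u\|_{L^n}\leq 1,
\end{equation*}
with $C$ independent of $p$. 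Sharpness of $\alpha_n$ in the global statement then follows by testing Moser concentrating sequences inside a single chart, which is essentially Euclidean.

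\textbf{Level-set route.} Fix $u$ with $\|u\|_{1,\tau}\leq 1$ and a large threshold $L>0$. On $\{|u|\leq L\}$, the Taylor expansion $\phi_n(t)\sim t^{n-1}/(n-1)!$ near $t=0$ gives an $L^n$-type bound on the integral, controlled by $\|u\|_n^n\leq 1/\tau$. On $\{|u|>L\}$, I would introduce the truncation $v=\mathrm{sgn}(u)(|u|-L)_+$, which satisfies $\|\nabla v\|_n\leq \|\nabla u\|_n\leq 1$ and is supported in a set of measure at most $(\tau L^n)^{-1}$. Using the pointwise bound $|u|^{n/(n-1)}\leq(1+\delta)|v|^{n/(n-1)}+C_\delta L^{n/(n-1)}$ on $\{|u|>L\}$ together with standard manipulations of $\phi_n$, the problem reduces to bounding $\int_{\{|u|>L\}} \phi_n(\alpha_n(1+\delta)|v|^{n/(n-1)})\,dV_g$ by a constant. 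Covering the support of $v$ by a Vitali family of harmonic balls and applying the local step to suitable further localizations of $v$ gives this bound, provided $\delta$ is small enough. The rigorization announced in the paper amounts to making these steps precise and verifying convergence of the sum without losing the sharp constant $\alpha_n$.

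\textbf{Gromov-cover route and main obstacle.} Gromov's packing lemma, available under the Ricci lower bound, produces a cover $\{B(x_k,r)\}$ of $M$ with $r\lesssim r_H$ whose doubled family has multiplicity at most $N=N(\lambda,i,n)$. I would take a subordinate partition of unity $\{\eta_k\}$ with $|\nabla\eta_k|\leq C/r$, decompose $u=\sum_k u\eta_k$, apply the local step to each $u\eta_k$, and sum using the finite multiplicity $N$. The extra cutoff term $|\nabla\eta_k|\,|u|\sim |u|/r$ appearing in the gradient of $u\eta_k$ must be absorbed into the zeroth-order part $\tau\|u\|_n^n$ of the norm, which forces the condition $\tau\geq\epsilon=O(1/r_H)$ in the theorem. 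The hardest point in both routes is preserving the sharp constant $\alpha_n$: in the level-set route one must control the factor $(1+\delta)$ from truncation, and in the Gromov route the cutoff-derivative error must be absorbed without degrading $\alpha_n$. This balancing of $\tau$ against $1/r_H$ is precisely why the theorem requires $\tau>\epsilon$ rather than merely $\tau>0$.
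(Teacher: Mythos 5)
Your overall architecture (uniform local estimate on harmonic balls, then globalization either by the level-set argument or by Gromov's covering with cutoffs, with the cutoff error forcing $\tau\gtrsim 1/r_H$) is the same as the paper's, but your local step hides a genuine gap, and it is exactly the point the paper is written to address. Pulling back the Euclidean critical Trudinger--Moser inequality through a harmonic chart does \emph{not} yield the sharp exponent $\alpha_n$: in a chart with $Q^{-1}\delta_{ij}\le g_{ij}\le Q\delta_{ij}$ one only gets $\|\nabla_{\mathrm{euc}}\tilde u\|_{L^n(dx)}\le Q^{c}\|\nabla_g u\|_{L^n(dV_g)}$ for some $c>0$, so the Euclidean inequality applied to $\tilde u/Q^{c}$ controls $\phi_n\bigl(\alpha_n Q^{-cn/(n-1)}|\tilde u|^{n/(n-1)}\bigr)$, a strictly subcritical statement; and since the subcritical supremum blows up as the exponent tends to $\alpha_n$ (this is precisely the content of Theorem~\ref{improvedAT}), no limiting argument in $Q$ recovers the critical constant with a uniform bound. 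The paper gets the sharp local bound by a different mechanism: in Theorem~\ref{thm:uniform} it exploits the \emph{full} norm $\|u\|_{1,\tau}\le 1$ (not just $\|\nabla u\|_n\le 1$), multiplies by a cutoff and rescales by $a\sim(\|\nabla u\|_n+\delta^{-1}\|u\|_n)^{-1}$ so that the rescaled function has gradient norm $\le\tfrac12$; then $\alpha_n$ applied to $w$ becomes a \emph{fixed subcritical} exponent applied to $z=2w$, and finiteness of $AT(\alpha,0)$ together with the uniform control of $a^n\|u\|_{L^n}^n$ closes the estimate --- i.e.\ the note passes from subcritical to critical, rather than transporting the critical Euclidean inequality through a chart. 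The version of the local inequality you assert (critical $\alpha_n$ for $W_0^{1,n}$ functions with only a gradient bound, uniformly in the center, as in Theorem~\ref{local}) is obtained in the paper only via Adams' method and the uniform Green's function estimates of Theorem~\ref{Green}, and that route needs the \emph{additional} hypothesis of an upper sectional curvature bound. So your one-line local step assumes the hardest part of the theorem.

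Two further points in your globalizations would also fail as written. In the Gromov-cover route, applying the normalized local estimate to each $u\eta_k$ and ``summing using the finite multiplicity $N$'' does not converge: there are infinitely many balls, each contributing a fixed constant. One needs the homogeneous form of the local bound, $\int_{B_{\delta/2}(p)}\phi_n(\alpha_n|f|^{n/(n-1)})\,dV_g\le C\,\|f\|_{1,\tilde\tau}^{\,n}$ as in \eqref{eq:local-homog}, so that bounded overlap controls $\sum_j\|\psi_j^2u\|_{1,\tilde\tau}^n$ by $N$ times global quantities; this is how the paper's global theorem is actually closed. In the level-set route, the reduction to $\int_{\{|u|>L\}}\phi_n\bigl(\alpha_n(1+\delta)|v|^{n/(n-1)}\bigr)\,dV_g$ with only $\|\nabla v\|_n\le 1$ cannot be bounded, no matter how small $\delta$ is or how small the measure of the support: the exponent is supercritical and the supremum is infinite. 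The correct argument must spend the norm deficit coming from the $\tau\|u\|_{L^n}$ term (so that $\|\nabla v\|_n$ is strictly below $1$ by a quantified amount) to absorb the factor $(1+\delta)$ --- which is again where the condition $\tau>\epsilon=O(1/r_H)$ enters, and again where the uniformity supplied by Theorem~\ref{thm:uniform} (or by Theorem~\ref{Green} under the extra curvature hypothesis) is indispensable.
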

In the proof of Theorem \ref{maintheorem}
 needs to be rigorized about the uniformity of the upper bound of the critical Trudinger-Moser inequality in terms of the bounded domains of the level sets involved. 
In this note, we will then simply give another simple proof by only establishing  the uniformity of the bounds of the critical Trudinger-Moser inequality  on geodesic balls within the manifolds  by modifying the ideas developed in the earlier work 
by Lam, Lu and Zhang \cite{LamLuZhang}. In fact, we will achieve this by using a scaling argument and the asymptotic estimates for the supremum of the subcritical Trudinger-Moser functional established in \cite{LamLuZhang}. As a consequence of this, we can prove the uniform boundedness of the critical Trudinger-Moser inequality on the level set of the functions under consideration in \cite{LiLu-AIM}. This thus rigorizes the proof of Theorem 1.1 
(namely, Theorem 1.3 in \cite{LiLu-AIM}) by covering the level set  by the geodesic balls.
    Therefore, the same level argument from local inequalities   
    to the global ones used in \cite{LiLu-AIM} are justified. 
     We make use of  the equivalence between the subcritical and critical Moser-Trudinger inequalties on geodesic balls.  This idea was originally in \cite{LamLuZhang} and already used in \cite{LiLu-AIM} to derive subcritical inequalities from critical inequalities. Meanwhile in this note, we will prove in the opposite direction, i.e. from subcritical inequalities to critical inequalities.

    The second main result of this note is to establish  that    Green functions for the Dirichlet problem on any small geodesic balls on Riemannian manifolds with positive injective radius, Ricci curvature lower bounded and sectional curvature upper bounded  have uniform estimates independent of the locations of the balls. We believe that this result is of its independent interest. More precisely,
    
    \begin{thm}\label{Green}
      Let $(M, g)$ be a complete noncompact Riemannian manifold whose Ricci curvature is lower bounded and sectional curvature is upper bounded, i.e. $\textit{Ric} \geq \lambda g$ and $K \leq a^2$ for some constant $\lambda, a \in \mathbb{R}$. Moreover, we assume that its injectivity radius has a lower bound, i.e. $\textit{inj} (M,g) \geq i > 0$ for some constant $i > 0$.  For any $Q >1$ and  $0 < \alpha < 1$, denote the geodesic ball as $B_\delta (x)$ with $\delta \leq r_H(n , i , \lambda , \alpha)$, where $r_H(n , i , \lambda , \alpha)$ is the harmonic radius. Then the Dirichlet Green's function over $B_{\delta} (x)$ satisfies the following estimate:
      
      $$
        |\nabla_y G(x,y)| \leq \frac{1}{\omega_{n-1}} d(x , y)^{1-n} (1 + C d(x , y)),
      $$
      where $C = C(n , \lambda , Q , \delta)$.
    \end{thm}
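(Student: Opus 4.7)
\medskip

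\noindent\textbf{Proof plan.} The strategy is to compare the Dirichlet Green's function on $B_\delta(x)$ with the Euclidean fundamental solution transplanted via a harmonic chart centered at $x$, and to quantify the residual gradient through a weighted Newton-potential estimate. Uniformity across $x\in M$ will be automatic from the uniform lower bound on the harmonic radius $r_H$ under the stated Ricci, sectional curvature, and injectivity radius hypotheses.

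\medskip

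\noindent\emph{Setup in a harmonic chart.} First I would invoke the harmonic radius hypothesis to fix harmonic coordinates $\Phi:B_\delta(x)\to U\subset\mathbb{R}^n$ with $\Phi(x)=0$. Since the coordinate functions are $g$-harmonic, the Laplace--Beltrami operator reduces to $\Delta_g u=g^{ij}(y)\partial_i\partial_j u$ (no first-order drift). After a linear normalization arranging $g_{ij}(0)=\delta_{ij}$, the $Q$-quantitative $C^{1,\alpha}$-closeness gives
\[
|g^{ij}(y)-\delta^{ij}|\le C|y|,\qquad |\partial g^{ij}(y)|\le C,
\]
with $C=C(n,\lambda,Q,\delta)$. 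The two-sided curvature control (together with the injectivity radius lower bound and Rauch comparison) produces the coordinate/distance comparison $|y|=d(x,\Phi^{-1}(y))\bigl(1+O(d(x,\Phi^{-1}(y)))\bigr)$.

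\medskip

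\noindent\emph{Parametrix and correction.} Next I would introduce the Euclidean Newtonian kernel $E(y):=\frac{1}{(n-2)\omega_{n-1}}|y|^{2-n}$ and decompose $G(x,\Phi^{-1}(y))=E(y)-P(y)+H(y)$, where $P$ is the $\Delta_g$-harmonic function on $U$ matching $E|_{\partial U}$, and $H$ solves the Dirichlet problem
\[
\Delta_g H=-\bigl(g^{ij}(y)-\delta^{ij}\bigr)\partial_i\partial_j E(y)\ \text{in}\ U,\qquad H|_{\partial U}=0.
\]
Because $|\partial_i\partial_j E|\le C|y|^{-n}$ and $|g^{ij}-\delta^{ij}|\le C|y|$, the right-hand side is pointwise bounded by $C|y|^{1-n}$. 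Writing $H$ through its Green representation against the uniformly elliptic operator $g^{ij}\partial_i\partial_j$ and splitting the resulting integral into the regions $\{|z|\le|y|/2\}$, $\{|z-y|\le|y|/2\}$, and the exterior, I expect a weighted Newton-potential estimate to yield $|\nabla H(y)|\le C|y|^{2-n}$. For $P$, interior estimates for $\Delta_g$-harmonic functions applied to the smooth boundary data of size $\delta^{2-n}$ give the uniform bound $|\nabla P|\le C\delta^{1-n}$.

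\medskip

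\noindent\emph{Assembly.} Combining $|\nabla E(y)|=\omega_{n-1}^{-1}|y|^{1-n}$ with the two residual bounds and transporting $|y|$ to $d(x,y)$ via Step~1,
\[
|\nabla_y G(x,y)|\le \frac{1}{\omega_{n-1}}d(x,y)^{1-n}+Cd(x,y)^{2-n}+C\delta^{1-n} \le \frac{1}{\omega_{n-1}}d(x,y)^{1-n}\bigl(1+Cd(x,y)\bigr),
\]
where the final inequality absorbs the lower-order terms using $d(x,y)\le\delta$ and allows $C$ to depend on $\delta$.

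\medskip

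\noindent\emph{Main obstacle.} The delicate step is the sharp weighted estimate $|\nabla H(y)|\le C|y|^{2-n}$: this is strictly stronger than the $L^\infty$ gradient bound produced by generic $W^{2,p}$ elliptic theory and is what allows the correction to appear as a \emph{linear} factor $(1+Cd)$ rather than as $(1+Cd^{n-1})$. It relies on the first-order vanishing $g^{ij}(y)-\delta^{ij}=O(|y|)$ at the centre, which in turn forces one to exploit both the $C^{1,\alpha}$ (not merely $C^{0,\alpha}$) control in harmonic coordinates and the upper sectional curvature bound underlying the harmonic radius estimate. Uniformity of all constants in the base point $x$ will follow from the uniform positivity of $r_H$ under the stated geometric hypotheses.
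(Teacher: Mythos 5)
Your plan is a genuinely different route from the paper's. The paper constructs $G$ by Aubin's iterated parametrix built from the distance function, $H(x,y)=\frac{1}{(n-2)\omega_{n-1}}d(x,y)^{2-n}f(d(x,y))$, controls $\Gamma_1=-\Delta_y H$ through the Bishop and G\"unther comparison theorems (this is precisely where $K\le a^2$ enters, giving $\bigl|\Delta_z d(x,z)-\frac{n-1}{d(x,z)}\bigr|\le C\,d(x,z)$ and hence the linear correction factor), reduces the singularity by iterating a convolution estimate (Lemma \ref{Singularity Lemma}, which is the same estimate your weighted Newton-potential step needs with $\alpha=\beta=1$), and finishes with a barrier argument plus boundary Schauder estimates for the smooth error term $F$. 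You instead freeze coefficients in a harmonic chart centered at $x$, take the Euclidean kernel as the parametrix, and dump the entire correction into a single Dirichlet problem, letting the curvature hypotheses act only through coordinate regularity. If your inputs are granted, your argument is shorter and conceptually clean; the paper's argument has the advantage of using only the $C^{0,\alpha}$ harmonic coordinates of Anderson--Cheeger plus comparison geometry.

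There are, however, two steps you must genuinely supply. First, the bound $|g^{ij}(y)-\delta^{ij}|\le C|y|$ is \emph{not} available from the harmonic-coordinate framework the paper sets up: Anderson--Cheeger under $\mathrm{Ric}\ge\lambda g$ and $\mathrm{inj}\ge i$ gives only $C^{0,\alpha}\cap W^{1,p}$ control of the metric, i.e. $|g^{ij}(y)-\delta^{ij}|\le C|y|^{\alpha}$ with $\alpha<1$, and with that your scheme only produces $|\nabla H(y)|\le C|y|^{1+\alpha-n}$ and hence the weaker conclusion $(1+C\,d^{\alpha})$ rather than $(1+C\,d)$. To get the Lipschitz/$C^{1,\alpha}$ control you assume, you need Anderson's $C^{1,\alpha}$ harmonic radius theorem, which requires a two-sided Ricci bound; this does follow from $K\le a^2$ together with $\mathrm{Ric}\ge\lambda g$, but it is a stronger input than the $r_H(n,i,\lambda,\alpha)$ of the statement, it must be cited explicitly as the place where the sectional curvature upper bound is used, and your constants then depend on $a$ as well (as the paper's own proof constants do, via G\"unther). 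Second, your representation of $\nabla H$ presupposes the gradient bound $|\nabla_y G_{\mathrm{op}}(y,z)|\le C|y-z|^{1-n}$ for the Green's function of the variable-coefficient operator $g^{ij}\partial_i\partial_j$ on $U$, which is uncomfortably close to the statement being proved; there is no true circularity, since only the crude bound (not the sharp constant) is needed and it is classical for uniformly elliptic operators with H\"older coefficients (e.g. Gr\"uter--Widman type estimates, applicable once the $C^{1,\alpha}$ control lets you pass to divergence form), but this must be quoted or proved rather than left implicit. Two smaller points: the kernel identity should be tracked with the volume factor $\sqrt{\det g}$ (harmless after normalizing $g_{ij}(0)=\delta_{ij}$), and the uniform bound $|\nabla P|\le C\delta^{1-n}$ up to $\partial U$ requires a global boundary Schauder estimate, exactly as in the paper's treatment of $F$, not merely interior estimates.
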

    
    As a byproduct, this will allow us to conclude that  the Trudinger-Moser inequality on any level set is uniformly bounded independent of the location of the level set, within an extra sectional curvature assumption by using the uniform estimates of the Green functions.

Since the main purpose of this short note is to rigorize  the proof of Theorem 1.3 in \cite{LiLu-AIM} and in the meantime to give an alternative proof from the level set argument passing from  the local inequalities to global one, we have chosen not to give a comprehensive account of the subject (see e.g. \cite{LiLu-AIM} for relevant  references in the literature).

\section{Some  preliminaries in differential geometry}	

We first recall the Gromov's covering lemma:
	
	\begin{lem}[Lemma 1.6 of Hebey \cite{Hebey}]\label{Gromov covering}
		Let $(M,g)$ be a complete Riemannian manifold with Ricci lower bound $\lambda$, let $\delta > 0$ be given, then there exists a sequence of points $\{  x_j \}$ on $M$ such that for any $r \geq \delta$:
		\begin{enumerate}
			\item the family $\{ B_r (x_j) \}$ is a uniformly locally finite covering of $M$, there exists an upper bound for the overlapping number in terms of $n , \delta, r , \lambda$; 
			\item for any $i \neq j$, $B_{r/2}(x_i) \cap B_{r/2} (x_j) = \emptyset$.
		\end{enumerate}
	\end{lem}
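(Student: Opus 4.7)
The plan is to build the sequence $\{x_j\}$ by a maximal packing construction and then invoke the Bishop-Gromov volume comparison theorem, available under the Ricci lower bound $\mathrm{Ric} \geq \lambda g$, to control the overlap multiplicity at any scale $r \geq \delta$.

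First, using Zorn's lemma (or, equivalently, a greedy inductive construction that exploits the second-countability of $M$), I would extract a maximal subset $\{x_j\} \subset M$ satisfying the separation condition $d(x_i, x_j) \geq \delta$ whenever $i \neq j$. The triangle inequality immediately forces the balls $B_{\delta/2}(x_j)$ to be pairwise disjoint, which gives the packing statement of item~(2). Maximality also yields the covering property: if some $y \in M$ satisfied $d(y, x_j) \geq \delta$ for all $j$, then $\{x_j\} \cup \{y\}$ would still be $\delta$-separated, contradicting maximality. Hence $\{B_\delta(x_j)\}$ covers $M$, and \emph{a fortiori} so does $\{B_r(x_j)\}$ for every $r \geq \delta$.

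Second, to bound the overlap multiplicity of $\{B_r(x_j)\}$, I would fix $y \in M$ and let $N(y) = \#\{j : y \in B_r(x_j)\}$. For each such index $j$, the disjoint ball $B_{\delta/2}(x_j)$ lies inside $B_{r + \delta/2}(y)$ by the triangle inequality, so disjointness and volume additivity yield
\begin{equation*}
\sum_{j \,:\, y \in B_r(x_j)} \mathrm{Vol}\bigl(B_{\delta/2}(x_j)\bigr) \;\leq\; \mathrm{Vol}\bigl(B_{r + \delta/2}(y)\bigr).
\end{equation*}
By Bishop-Gromov, the ratio $\mathrm{Vol}(B_s(x))/V_\lambda(s)$ is non-increasing in $s$, where $V_\lambda(s)$ is the volume of a geodesic ball of radius $s$ in the simply-connected space form whose constant curvature is determined by $\lambda$. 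Comparing at scales $\delta/2$ and $2r + \delta/2$ around each $x_j$, and noting that $y \in B_r(x_j)$ forces $B_{r + \delta/2}(y) \subset B_{2r + \delta/2}(x_j)$, one obtains a uniform lower bound on $\mathrm{Vol}(B_{\delta/2}(x_j))$ in terms of $\mathrm{Vol}(B_{r + \delta/2}(y))$. Substituting into the displayed inequality gives $N(y) \leq N_0(n, \delta, r, \lambda)$, independent of $y$, which is the uniform local finiteness claimed in item~(1).

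The only substantive obstacle is the quantitative bookkeeping in the Bishop-Gromov step: once the explicit form of $V_\lambda$ is used (trigonometric, polynomial, or hyperbolic integrals according to the sign of $\lambda$), the overlap constant emerges as an explicit function of $n, \delta, r, \lambda$. Everything else reduces to the triangle inequality together with the maximality of the chosen $\delta$-separated packing, so the proof is essentially a volume-counting argument on a comparison model space.
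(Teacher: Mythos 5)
The paper does not prove this lemma at all---it quotes it verbatim from Hebey---and your maximal $\delta$-separated packing combined with Bishop--Gromov volume counting is precisely the standard argument given there, so your proposal is correct and essentially the same proof. One remark: the disjointness you obtain holds at the fixed scale $\delta/2$ (which is the correct form of Hebey's statement, and the form the paper actually uses later, with $B_{\delta/4}$ balls after relabeling $\delta\mapsto\delta/2$); the ``$B_{r/2}$'' in item (2) of the quoted statement is a typo, since disjointness for every $r\geq\delta$ is clearly impossible.
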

	
	Given $(M,g)$ a smooth Riemannian manifold and for any point $P \in M$, we automatically have the normal coordinate on any geodesic balls $B_\rho (P)$ within $\rho \leq \textit{inj} (M,g)$. This is realized by the inverse exponential map $\exp^{-1} : B_\rho(P) \to T_P M \cong \mathbb{R}^n$ such that $\exp^{-1} (P) = 0$. From now on we will denote the normal coordinate function as $\{ \tilde{x}^i \}_{i = 1}^n$. Then the geodesic polar coordinate reads as $(r , \theta) = (  |\tilde{x}|, \tilde{x} / |\tilde{x}| )$. Consider the following boundary value problem
	
	$$
	  \begin{cases}
	    \Delta x^i (Q) = 0 , \ \ \ \textit{for} \  Q \in B_\rho (P); \\
	    x^i (P) = 0 , \frac{\partial x^i}{\partial \tilde{x}^j} (P) = \delta^i_j; \\
	    x^i (Q)= \tilde{x}^i (Q) \ \ \ \textit{for} \ Q \in \partial B_\rho (P).
	  \end{cases}
	$$
	Its solution $x^i$ is called harmonic coordinate for $B_\rho(P)$. Moreover, lower bounds for both Ricci curvature and injectivity radius guarantee the existence of $C^{0,\alpha}$ harmonic coordinates. This is due to Anderson-Cheeger \cite{AndersonCheeger} (see also Hebey \cite{Hebey}) and to be precise:
		
	\begin{lem}\label{harmonic coordinates}
		Let $\alpha \in (0,1)$, $Q > 1$, $\delta > 0$ and $(M,g)$ satisfy $\textit{Ric} \geq \lambda g$ and $\textit{inj} (M , g)  \geq i > 0$. Then there exists a positive constant $C = C(n , Q , \alpha , \delta , i , \lambda)$ such that  for any $x \in M$, the $C^{0,\alpha}$ harmonic radius $r_H(Q , 0 , \alpha) (x) \geq C$. 
	\end{lem}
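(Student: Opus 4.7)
This is the Anderson--Cheeger $C^{0,\alpha}$ harmonic radius estimate as presented in Hebey's monograph \cite{Hebey}, so my plan is a local elliptic-PDE construction followed by a Cheeger--Gromov compactness argument to promote the local bound to one uniform in the base point.

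For the local construction, fix $x \in M$ and a small scale $\rho \leq i/10$, and work in the normal coordinate chart $\tilde{x}^i$ on $B_\rho(x)$. There $\tilde{g}_{ij}(0) = \delta_{ij}$, and the Ricci lower bound combined with Bishop--Gromov volume comparison controls the growth of $\tilde{g}_{ij}$ along geodesics emanating from $x$. One then solves the Dirichlet problem
$$
\Delta_g x^i = 0 \text{ in } B_\rho(x), \qquad x^i\big|_{\partial B_\rho(x)} = \tilde{x}^i,
$$
and uses $W^{2,p}$ estimates for the linear operator $\Delta_g$ (whose coefficients are the $g^{kl}$ read in normal coordinates) to bound $\|x^i - \tilde{x}^i\|_{C^1(B_{\rho/2}(x))}$. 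For $\rho$ small enough the Jacobian $(\partial x^i/\partial \tilde{x}^j)$ is uniformly close to the identity, so the $x^i$'s give a bona fide coordinate chart on a ball of comparable size.

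In these harmonic coordinates the metric components satisfy the quasilinear elliptic system
$$
g^{kl} \partial_k \partial_l g_{ij} = -2 R_{ij} + Q_{ij}(g, \partial g),
$$
with $Q_{ij}$ quadratic in $\partial g$. Given a preliminary $L^\infty$ control of the form $Q^{-1}\delta_{ij} \leq g_{ij} \leq Q\delta_{ij}$, De Giorgi--Nash--Moser followed by Schauder estimates upgrade this to a $C^{1,\alpha}$ (and in particular $C^{0,\alpha}$) bound on $g_{ij}$, from which the lower bound $r_H(Q,0,\alpha)(x) \geq C(n, Q, \alpha, \delta, i, \lambda)$ follows directly from the definition of the harmonic radius.

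The main obstacle is obtaining the a priori uniform $L^\infty$ control on $g$ in harmonic coordinates over all $x \in M$. I would handle this by a contradiction-compactness argument: if no uniform lower bound on $r_H$ existed, extract a sequence $x_k \in M$ along which the harmonic radius tends to $0$; after rescaling $(M,g)$ near $x_k$ to unit scale and recentering, the Cheeger--Gromov precompactness theorem (whose hypotheses are supplied precisely by $\textit{inj}(M,g) \geq i$ and $\textit{Ric} \geq \lambda g$) provides subsequential convergence in the pointed $C^{0,\alpha}$ topology to a complete limit $(M_\infty, g_\infty, x_\infty)$. The limit admits a harmonic coordinate chart of definite positive radius (by the local construction applied to the smooth limit), contradicting the rescaled assumption that the radii tended to $0$. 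This is the substance of Anderson--Cheeger \cite{AndersonCheeger} and yields the required uniform constant $C$.
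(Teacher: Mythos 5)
There is a genuine gap, and it sits exactly at the point where the hypotheses of this lemma differ from those of Anderson's earlier $C^{1,\alpha}$ harmonic radius theorem. Your central elliptic step — writing $g^{kl}\partial_k\partial_l g_{ij}=-2R_{ij}+Q_{ij}(g,\partial g)$ in harmonic coordinates and running De Giorgi--Nash--Moser plus Schauder to get a $C^{1,\alpha}$ bound on $g_{ij}$ — requires the Ricci tensor on the right-hand side to be controlled in $L^\infty$ (for $W^{2,p}$) or $C^{\alpha}$ (for Schauder). Here one only has $\textit{Ric}\geq\lambda g$, a one-sided bound, so that source term is not bounded and the bootstrapping does not run; indeed the conclusion you claim ($C^{1,\alpha}$ control of the metric, hence a $C^{1,\alpha}$ harmonic radius bound) is not available under these hypotheses, which is precisely why Anderson--Cheeger \cite{AndersonCheeger} prove only a $C^{0,\alpha}$ statement. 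The actual mechanism, which is what the remark following the lemma in this note records, is different: one obtains $W^{2,p}$ bounds on the harmonic coordinate functions (via integral Bochner-type estimates on Hessians of harmonic functions, which need only the Ricci lower bound, together with the injectivity radius bound), hence $W^{1,p}$ bounds on $g_{ij}$ for all finite $p$, and then the Sobolev embedding with $\alpha=1-\frac{n}{p}$ yields the $C^{0,\alpha}$ control. A related problem occurs in your preliminary step: Bishop--Gromov gives volume comparison, not pointwise two-sided control of $\tilde g_{ij}$ in normal coordinates, so the $W^{2,p}$ estimates for the Dirichlet problem with normal-coordinate boundary data are not justified from a Ricci lower bound alone.

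Your final compactness step is also circular as stated: pointed $C^{0,\alpha}$ precompactness of the class $\{\textit{Ric}\geq\lambda g,\ \textit{inj}\geq i\}$ is the main theorem of Anderson--Cheeger, and it is deduced \emph{from} the uniform harmonic radius bound you are trying to prove; the classical Cheeger--Gromov theorem with smooth convergence needs two-sided sectional (or at least Ricci) curvature bounds. The correct blow-up argument must instead establish convergence of the rescaled sequence directly from the $W^{1,p}$ harmonic-coordinate estimates (under rescaling the Ricci lower bound tends to $0$ and the injectivity radius to $\infty$), and then identify the limit as flat $\mathbb{R}^n$ to reach the contradiction. Note finally that the paper itself does not reprove this lemma: it is quoted from \cite{AndersonCheeger} (see also \cite{Hebey}), with the remark indicating the $W^{1,p}$-plus-Sobolev route just described, so if you want a self-contained proof you should follow that route rather than the $C^{1,\alpha}$ elliptic bootstrap.
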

	
	Combining Lemma \ref{Gromov covering} and Lemma \ref{harmonic coordinates}, we obtain a sequence of balls $\{ B_\delta (x_j) \}$ satisfies following:
	
	\begin{itemize}
		\item For $Q > 1$ fixed (close to 1), $\delta \leq r_H (Q, 0 , \alpha)$ and $Q^{-1} \delta_{ij} \leq g_{ij} \leq Q \delta_{ij}$ and moreover, $g_{ij}$ satisfies the $C^{0,\alpha}$ estimate:
		$$
		r_H^\alpha \sup_{y \neq z} \frac{|g_{ij} (z) - g_{ij} (y) | }{d (y,z)^\alpha} \leq Q - 1.
		$$
		\item In the overlapping charts $B_\delta(x_i) \cap B_\delta(x_j)$, the transition function satifies
		$$
		  ||y^k \circ (x^l)^{-1}||_{C^{1 , \alpha}} \leq C(n , Q , \lambda , \alpha , \delta , i).
		$$ 
		\item $\{ B_{\delta/2} (x_j) \}$ covers $M$ and when $i \neq j$, $B_{\delta/4}(x_i) \cap B_{\delta / 4} (x_j) = \emptyset$;
		\item There exists $N = N(n , \lambda, \delta)$ such that any $x \in M$ belongs to at most $N$ of $B_{\delta} (x_j)$'s.
	\end{itemize}
	\begin{rem}
	  Lemma \ref{harmonic coordinates} is due to  Anderson-Cheeger \cite{AndersonCheeger}. The $C^{0 , \alpha}$ result was achieved by the existence of $W^{1, p}$- harmonic coordinates and a classical Sobolev embedding for $\alpha = 1 - \frac{n}{p}$. This means in each local chart, the metric tensor satisfies 
	  $$
	    r_H^{1 - \frac{n}{p}} ||\nabla g_{ij}||_{p} \leq Q - 1.
	  $$
	  Similar embedding also appears for the coordinate functions $x^i , i = 1 , \dots , n$. We have $W^{2 , p}$ , and hence $C^{1, \alpha}$ uniform bounds.
	\end{rem}

	\section{A path from subcritical inequalities to critical ones: Proof of Theorem \ref{maintheorem}}

This section gives a proof of Theorem \ref{maintheorem} by establishing uniform critical Moser-Trudinger inequalities on geodesic balls from subcritical ones.  By a covering lemma, we can  establish the global ones either by using the level set argument as done in \cite{LiLu-AIM} or the covering lemma. 
 
	\medskip

 We first recall some notations. 
	
	\paragraph{\bf Notation.}
	Let $(M,g)$ be an $n$-dimensional Riemannian manifold, $n\ge 2$.
	Fix $\tau>0$ and define
	$$
	\norm{u}_{1,\tau}:=\norm{\grad u}_{L^n(M)}+\tau\norm{u}_{L^n(M)}.
	$$
	Let
	$$
	\phi_n(t):=e^t-\sum_{k=0}^{n-2}\frac{t^k}{k!},
	\qquad
	\alpha_n:=n\Bigl(\frac{n\pi^{n/2}}{\Gamma(\frac n2+1)}\Bigr)^{\!\!\frac1{n-1}}
	= n\,\omega_{n-1}^{1/(n-1)}.
	$$
	
	\medskip

	We first quote a result from \cite{LamLuZhang}.
	The following theorem  provides the lower and upper
	bounds asymptotically for the supremum of the subcritical Trudinger-Moser inequality in $\mathbb{R}^n$. 
	
	\begin{thm}
		\label{improvedAT}\textit{Let }$n\geq2$ and  $0\leq\alpha<\alpha_{n}.$ Denote%
		$$
		AT\left(  \alpha,\beta\right)  =\sup_{\left\Vert \nabla u\right\Vert _{n}%
			\leq1}\frac{1}{\left\Vert u\right\Vert _{n}^{n-\beta}}\int_{%
			\mathbb{R}
			^{n}}\phi_{n}\left(  \alpha\left(  1-\frac{\beta}{n}\right)  \left\vert
		u\right\vert ^{\frac{n}{n-1}}\right)  \frac{dx}{\left\vert x\right\vert
			^{\beta}}.
	$$
		Then there exist positive constants $c=c\left(  n,\beta\right)  $ and
		$C=C\left(  n,\beta\right)  $ such that when $\alpha$ is close enough to
		$\alpha_{N}:$
		\begin{equation}
			\frac{c\left(  n,\beta\right)  }{\left(  1-\left(  \frac{\alpha}{\alpha_{n}%
				}\right)  ^{n-1}\right)  ^{\left(  n-\beta\right)  /}}\leq AT\left(
			\alpha,\beta\right)  \leq\frac{C\left(  n,\beta\right)  }{\left(  1-\left(
				\frac{\alpha}{\alpha_{n}}\right)  ^{n-1}\right)  ^{\left(  n-\beta\right)
					/n}}. \label{1.3.1}%
		\end{equation}
		Moreover, the constant $\alpha_{N}$ is sharp in the sense that $AT\left(
		\alpha_{N},\beta\right)  =\infty.$
	\end{thm}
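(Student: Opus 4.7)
The plan is to prove both bounds in \eqref{1.3.1} separately. The main reduction is a dilation that eliminates the $\|u\|_n^{n-\beta}$ factor in the definition of $AT(\alpha,\beta)$, after which the asymptotic rates follow from a level-set decomposition against the sharp singular Trudinger-Moser inequality on balls (for the upper bound) and from explicit Moser-type test functions (for the lower bound).

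\textbf{Scaling reduction.} By Schwarz symmetrization, valid since $|x|^{-\beta}$ is radially nonincreasing, restrict to radial nonincreasing $u$ with $\|\nabla u\|_n=1$. The dilation $v(y):=u(\|u\|_n\,y)$ satisfies $\|\nabla v\|_n=1$ and $\|v\|_n=1$, and the change of variables $x=\|u\|_n\,y$ converts
\[
  \frac{1}{\|u\|_n^{n-\beta}}\int_{\mathbb{R}^n}\phi_n\bigl(\alpha(1-\tfrac{\beta}{n})|u|^{\frac{n}{n-1}}\bigr)\frac{dx}{|x|^\beta}
  =\int_{\mathbb{R}^n}\phi_n\bigl(\alpha(1-\tfrac{\beta}{n})|v|^{\frac{n}{n-1}}\bigr)\frac{dy}{|y|^\beta}.
\]
Hence $AT(\alpha,\beta)$ equals the supremum of the right-hand side over radial nonincreasing $v$ with $\|\nabla v\|_n\le 1$ and $\|v\|_n=1$, and the task is to describe the blow-up of this supremum as $\alpha\uparrow\alpha_n$.

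\textbf{Upper bound.} For such $v$ and a threshold $L>0$ to be tuned, split at $\{|v|\ge L\}$; Chebyshev plus radial monotonicity forces this superlevel set to be a ball $B_{R_L}$ with $R_L\lesssim L^{-1}$. On $\{|v|<L\}$, Taylor-expand $\phi_n$ in powers of $|v|^{n/(n-1)}$ and control each polynomial moment by $\|v\|_n=1$, giving an $O(1)$ contribution depending polynomially on $L$. On $B_{R_L}$, write $(v-L)_+\in W^{1,n}_0(B_{R_L})$ and apply the sharp singular Adimurthi-Yang inequality on bounded domains, whose exponential constant is $\alpha_n(1-\beta/n)$, combined with the Young-type inequality
\[
  |v|^{n/(n-1)}\le (1+\varepsilon)(v-L)_+^{n/(n-1)}+C(n,\varepsilon)\,L^{n/(n-1)}\quad\text{on }\{|v|\ge L\}.
\]
This reduces the high-level integral to one with effective constant $(1+\varepsilon)\alpha$. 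Choosing $\varepsilon$ and $L$ so that $(1+\varepsilon)\alpha/\alpha_n$ stays below $1$ by exactly the gap $1-(\alpha/\alpha_n)^{n-1}$, and balancing the prefactor $e^{C(n)L^{n/(n-1)}}$ against the $R_L$-dependence of the Adimurthi-Yang bound, yields the upper estimate with exponent $(n-\beta)/n$.

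\textbf{Lower bound.} Plug in the classical Moser sequence
\[
M_k(x)=\omega_{n-1}^{-1/n}\begin{cases}k^{(n-1)/n}, & |x|\le e^{-k},\\ k^{-1/n}\log(1/|x|), & e^{-k}\le|x|\le 1,\\ 0, & |x|\ge 1,\end{cases}
\]
for which $\|\nabla M_k\|_n=1$ and $\|M_k\|_n^n\sim c_n/k$. Using $\omega_{n-1}^{-1/(n-1)}=n/\alpha_n$ one has $|M_k|^{n/(n-1)}=(n/\alpha_n)k$ on the inner ball, so
\[
  \int_{|x|\le e^{-k}}\phi_n\bigl(\alpha(1-\tfrac{\beta}{n})|M_k|^{n/(n-1)}\bigr)\frac{dx}{|x|^\beta}\gtrsim e^{-(n-\beta)k(1-\alpha/\alpha_n)}.
\]
Dividing by $\|M_k\|_n^{n-\beta}\sim c_n^{(n-\beta)/n}k^{-(n-\beta)/n}$ and maximizing in $k$ at $k=1/(n(1-\alpha/\alpha_n))$ yields a lower bound of order $(1-\alpha/\alpha_n)^{-(n-\beta)/n}$. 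Since $1-(\alpha/\alpha_n)^{n-1}\asymp 1-\alpha/\alpha_n$ as $\alpha\uparrow\alpha_n$, this matches the left side of \eqref{1.3.1}; letting $k\to\infty$ at $\alpha=\alpha_n$ also gives $AT(\alpha_n,\beta)=\infty$.

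\textbf{Main obstacle.} Getting the sharp exponent $(n-\beta)/n$ on the upper side is delicate: the three free parameters (threshold $L$, the $\varepsilon$ in Young's inequality, and the radius $R_L$ on which Adimurthi-Yang is applied) must be coupled so that $(1+\varepsilon)\alpha$ approaches $\alpha_n$ at precisely the rate dictated by $1-(\alpha/\alpha_n)^{n-1}$. Any suboptimal balancing inflates the bound by a positive power of this gap and breaks the match with the Moser-sequence lower bound.
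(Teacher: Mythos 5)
Your lower bound and the sharpness statement are correct: the dilation $v(y)=u(\|u\|_n y)$ does remove the factor $\|u\|_n^{n-\beta}$ exactly, the Moser functions $M_k$ satisfy $\|\nabla M_k\|_n=1$, $\|M_k\|_n^n\sim c_n/k$, $|M_k|^{n/(n-1)}=\tfrac{n}{\alpha_n}k$ on the core, and optimizing $k\sim \tfrac{1}{n(1-\alpha/\alpha_n)}$ gives the rate $(1-\alpha/\alpha_n)^{-(n-\beta)/n}\asymp (1-(\alpha/\alpha_n)^{n-1})^{-(n-\beta)/n}$. Note, however, that the paper does not prove this theorem at all: it is quoted from Lam--Lu--Zhang, and the proof there obtains the upper bound by a pure scaling argument rather than your level-set decomposition. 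Namely, for $\|\nabla u\|_n\leq 1$ one sets $w=(\alpha/\alpha_n)^{(n-1)/n}u$, so that $\alpha|u|^{n/(n-1)}=\alpha_n|w|^{n/(n-1)}$ and $\|\nabla w\|_n^n\leq(\alpha/\alpha_n)^{n-1}$; a dilation $w(\mu\,\cdot)$ with $\mu^n=\|w\|_n^n/(1-(\alpha/\alpha_n)^{n-1})$ makes $w$ admissible for the critical full-norm singular Trudinger--Moser inequality on $\mathbb{R}^n$, and undoing the dilation produces exactly the factor $(1-(\alpha/\alpha_n)^{n-1})^{-(n-\beta)/n}\|u\|_n^{n-\beta}$. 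This route needs no symmetrization, no cutoff level, and no balancing.

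The genuine gap in your proposal is precisely the step you defer to the ``main obstacle'': the upper bound's quantitative balancing is asserted, not carried out, and two of your intermediate claims point the wrong way. First, on $\{|v|<L\}$ the correct pointwise bound is $\phi_n(\alpha(1-\tfrac{\beta}{n})v^{n/(n-1)})\leq L^{-n}\phi_n(\alpha(1-\tfrac{\beta}{n})L^{n/(n-1)})\,v^n$, which is exponential in $L^{n/(n-1)}$, not polynomial; moreover the weighted moment $\int v^n|x|^{-\beta}dx$ is not controlled by $\|v\|_n=1$ alone (you need the radial lemma $v(x)\leq C_n|x|^{-1}$ or a Hardy/CKN-type inequality to handle the singular weight). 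Second, since the sharp bounded-domain inequality forces $(1+\varepsilon)\alpha\leq\alpha_n$, you must take $\varepsilon\leq \alpha_n/\alpha-1$, whence $C(n,\varepsilon)\gtrsim(1-\alpha/\alpha_n)^{-1/(n-1)}$ and the prefactor is $\exp\bigl(c_n(1-\alpha/\alpha_n)^{-1/(n-1)}L^{n/(n-1)}\bigr)$, not $e^{C(n)L^{n/(n-1)}}$; this stays bounded only if $L\lesssim(1-\alpha/\alpha_n)^{1/n}$, i.e. the threshold must tend to $0$, and then the entire rate comes from the superlevel-set measure via $|\{v>L\}|^{1-\beta/n}\leq L^{-(n-\beta)}$, while the low part is harmless because $L$ is small. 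With the explicit choices $1+\varepsilon=\alpha_n/\alpha$ and $L=(1-\alpha/\alpha_n)^{1/n}$ your scheme does close and matches the Moser-sequence lower bound, but as written the decisive coupling is missing, and it is in any case avoidable by the scaling argument of the cited paper.
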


	\medskip
	
	\begin{thm}[Uniform local estimate]\label{thm:uniform}
	Given a complete and noncompact Riemannian manifold $M$. 	Assume there exist $\delta_0>0$ and $\Lambda\ge 1$ such that for every $p\in M$
		there is a harmonic coordinate chart
		$$
		\varphi_p: B_{\delta_0}(p)\longrightarrow \R^n
	$$
		whose image contains a Euclidean ball $B_r(0)$ with $r\in[c_1\delta_0,c_2\delta_0]$,
		and such that on $B_{\delta_0}(p)$
		$$
		\Lambda^{-1}\abs{\xi}^2\leq g^{ij}(x)\xi_i\xi_j\leq \Lambda\abs{\xi}^2,
		\qquad
		\Lambda^{-1}\leq \sqrt{\det(g_{ij}(x))}\leq \Lambda.
		$$
		Then there exists $\delta\in(0,\delta_0]$ and $C<\infty$, depending only on $(n,\tau,\Lambda)$,
		such that for every $u\in W^{1,n}(M)$ with $\norm{u}_{1,\tau}\leq 1$,
		$$
		\sup_{p\in M}\int_{B_{\delta/2}(p)}\phi_n\!\left(\alpha_n\abs{u}^{\frac{n}{n-1}}\right)\,dV_g
		\leq C.
	$$
	\end{thm}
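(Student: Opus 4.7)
The plan is to pull $u$ back to $\R^n$ through the harmonic chart $\varphi_p$, introduce a cutoff to reduce to a compactly supported Euclidean problem, and then invoke the asymptotic subcritical estimate of Theorem \ref{improvedAT} via a rescaling in the spirit of Lam--Lu--Zhang.

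First I would fix $p\in M$, set $\tilde u:=u\circ\varphi_p^{-1}$ on $\varphi_p(B_{\delta_0}(p))\subset\R^n$, and pick a smooth cutoff $\eta\in C_c^\infty(\R^n)$ with $\eta\equiv 1$ on $B_{r/2}(0)$, $\operatorname{supp}\eta\subset B_r(0)$, and $|\nabla\eta|\le C/r$. The radius $r$ is chosen comparable to $\delta$ so that $\varphi_p(B_{\delta/2}(p))\subset B_{r/2}(0)\subset B_r(0)\subset\varphi_p(B_{\delta_0}(p))$, which is possible by the comparability of geodesic and Euclidean balls in the chart. Setting $w:=\eta\tilde u$ extended by zero and invoking the two-sided bounds on $g^{ij}$ and $\sqrt{\det g_{ij}}$, the constraint $\|u\|_{1,\tau}\le 1$ converts to Euclidean estimates
$$
\|\nabla w\|_{L^n(\R^n)}\le \Lambda^{(n+2)/(2n)}\,a+\tfrac{C\Lambda^{1/n}}{r}\,b,\qquad \|w\|_{L^n(\R^n)}\le \Lambda^{1/n}\,b,
$$
where $a:=\|\nabla u\|_{L^n(M,dV_g)}$, $b:=\|u\|_{L^n(M,dV_g)}$ and $a+\tau b\le 1$. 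Since $dV_g\le\Lambda\,dx$ and $w=\tilde u$ on $\varphi_p(B_{\delta/2}(p))$, bounding the target integral reduces, up to a factor depending only on $\Lambda$, to bounding $\int_{\R^n}\phi_n(\alpha_n|w|^{n/(n-1)})\,dx$.

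The heart of the proof is the rescaling argument. Setting $A:=\|\nabla w\|_{L^n(\R^n)}$ and $\tilde w:=w/A$ (so $\|\nabla\tilde w\|_n=1$), I would choose the exponent $\alpha:=\alpha_n A^{n/(n-1)}$. In the main regime $A<1$ one has $\alpha<\alpha_n$ and $\alpha|\tilde w|^{n/(n-1)}=\alpha_n|w|^{n/(n-1)}$; Theorem \ref{improvedAT} with $\beta=0$ then yields
$$
\int_{\R^n}\phi_n\bigl(\alpha_n|w|^{n/(n-1)}\bigr)\,dx\le\frac{C(n)\,\|w\|_n^n}{A^n(1-A^n)}.
$$
It remains to bound the right hand side by a constant depending only on $(n,\tau,\Lambda)$. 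When $A$ is close to $1$ (the critical regime), the inequality $a+\tau b\le 1$ forces $b$ (hence $\|w\|_n$) to be small enough to offset the blowup of $(1-A^n)^{-1}$, via $1-A^n\ge c(n,\Lambda,r,\tau)\,\tau b$. When $A$ stays in a compact subinterval of $(0,1)$, the bound is immediate. The remaining regime $A\ge 1$ must be treated by a separate case analysis, either via the Taylor estimate $\phi_n(t)\le t^{n-1}e^t/(n-1)!$ combined with Sobolev embedding on the bounded ball $B_r(0)$, or via a level-set decomposition of $u$.

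The main obstacle is Step 3: because the chart distortion forces $\Lambda^{(n+2)/(2n)}\ge 1$, one cannot arrange $\|\nabla w\|_{L^n(\R^n)}\le 1$ by a naive normalization, and so the rescaled exponent $\alpha=\alpha_n A^{n/(n-1)}$ must be carefully matched against the precise blowup rate $(1-(\alpha/\alpha_n)^{n-1})^{-1}$ of Theorem \ref{improvedAT}, uniformly across the regimes of $A$. Once this delicate balance is achieved, the comparability $dV_g\le\Lambda\,dx$ converts the Euclidean bound back to the Riemannian one on $B_{\delta/2}(p)$, producing the uniform constant $C=C(n,\tau,\Lambda)$.
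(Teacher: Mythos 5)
Your overall skeleton (pull back through the harmonic chart, cut off, reduce to the Euclidean subcritical estimate of Theorem \ref{improvedAT}) is the same as the paper's, but the scaling step---which is the heart of the matter---is handled differently, and it is exactly where your argument has a genuine gap, one you yourself flag but do not close. You normalize $w$ by its own Euclidean gradient norm $A=\norm{\grad w}_{L^n(\R^n)}$ and take the $u$-dependent exponent $\alpha=\alpha_n A^{n/(n-1)}$, so in the near-critical regime you must show $\norm{w}_{L^n}^n\leq C\,(1-A^n)$. The justification you propose, $1-A^n\geq c\,\tau b$, does not follow from $a+\tau b\leq 1$: the chart only gives $A\leq \Lambda^{(n+2)/(2n)}a+\frac{C\Lambda^{1/n}}{r}\,b$ with $\Lambda^{(n+2)/(2n)}\geq 1$, so $A$ can approach $1$ (or exceed it) while $\tau b$ stays bounded below. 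For instance, with an anisotropic metric within the allowed bounds, a function varying mainly along the stretched direction has Euclidean gradient norm close to $1$ while its Riemannian norm $a$ is only a fixed fraction of $1$, leaving room in the constraint for $\tau b$ of order one; the cutoff term $\frac{C}{r}\norm{u}_{L^n}$ inflates $A$ further. Along such a sequence your bound $C\norm{w}_n^n/(A^n(1-A^n))$ blows up. Moreover the leftover regime $A\geq 1$ cannot be reached by Theorem \ref{improvedAT} at all (the supremum is infinite at $\alpha_n$), and the suggested fallback $\phi_n(t)\leq t^{n-1}e^t/(n-1)!$ plus Sobolev embedding cannot repair it, since $W^{1,n}$ control with gradient norm $\geq 1$ gives no exponential integrability (Moser concentration sequences). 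A minor further point: your ``compact subinterval'' case also silently needs $\norm{w}_n\leq CrA$, i.e. the Poincar\'e inequality for $w\in W^{1,n}_0(B_r)$, to cover small $A$.

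The paper avoids this entirely by scaling the function instead of the exponent: it sets $v=a\tilde u$ with the $u$-dependent constant $a=\bigl(2C_1(\norm{\grad u}_{L^n(M)}+\delta^{-1}\norm{u}_{L^n(M)})\bigr)^{-1}$, which forces $\norm{\grad(\eta v)}_{L^n(\R^n)}\leq \frac12$ for every admissible $u$; Theorem \ref{improvedAT} is then invoked only at the fixed subcritical exponent $\alpha=\alpha_n 2^{-n/(n-1)}$, so only the finiteness of $AT(\alpha,0)$ is used and the blow-up rate in (\ref{1.3.1}) never enters, while uniformity in $u$ and $p$ comes from the elementary bound $a^n\norm{u}_{L^n(M)}^n\leq(\delta/2C_1)^n$. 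If you wish to keep your variable-exponent route you would have to prove the missing comparison between $\norm{w}_n^n$ and $1-A^n$, which the constraint $\norm{u}_{1,\tau}\leq 1$ alone does not provide once $\Lambda>1$; the fixed-subcritical-exponent scaling is the cleaner repair.
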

	
	\begin{proof}
		Fix $\delta\in(0,\delta_0]$ (chosen at the end depending only on $(n,\tau,\Lambda)$) and fix $p\in M$.
		Let $\varphi:=\varphi_p$ be the harmonic chart on $B_\delta(p)$.
		Write $B_r:=B_r(0)\subset \varphi(B_\delta(p))$ with $r\sim\delta$ (constants depending only on $\Lambda$), and define
		$$
		\tilde u(x):=u(\varphi^{-1}(x)),\qquad x\in B_r.
	$$
		By the metric and Jacobian comparability in harmonic coordinates, Euclidean and Riemannian
		$L^n$-norms on $B_\delta(p)$ and $B_r$ are comparable up to constants depending only on $\Lambda$.
		
		\medskip
		\noindent\textbf{Step 1: Cutoff (localization).}
		Choose $\eta\in C_c^\infty(B_r)$ such that
		$$
		\eta\equiv 1 \ \text{on }B_{r/2},\qquad 0\leq \eta\leq 1,\qquad
		\abs{\grad\eta}\leq \frac{C}{r}.
		$$
		For a scaling parameter $a>0$ (to be chosen), set
		$$
		v(x):=a\,\tilde u(x)\quad (x\in B_r),\qquad w:=\eta v,
		$$
		and extend $w$ by $0$ to $\R^n$. Then $w\in W^{1,n}(\R^n)$ and $w=v$ on $B_{r/2}$, hence
		\begin{equation}\label{eq:reduce}
			\int_{B_{r/2}}\phi_n\!\left(\alpha_n\abs{v}^{\frac{n}{n-1}}\right)\,dx
			=
			\int_{B_{r/2}}\phi_n\!\left(\alpha_n\abs{w}^{\frac{n}{n-1}}\right)\,dx
			\leq
			\int_{\R^n}\phi_n\!\left(\alpha_n\abs{w}^{\frac{n}{n-1}}\right)\,dx.
		\end{equation}
		
		\medskip
		\noindent\textbf{Step 2: Gradient control for the cutoff.}
		Since $\grad w=\eta\grad v+v\grad\eta$,
	$$
		\norm{\grad w}_{L^n(\R^n)}
		\leq
		C\left(\norm{\grad v}_{L^n(B_r)}+\frac{1}{r}\norm{v}_{L^n(B_r)}\right).
		$$
		Using $v=a\tilde u$, $r\sim\delta$, and harmonic coordinate comparability, we obtain
		\begin{equation}\label{eq:gradw}
			\norm{\grad w}_{L^n(\R^n)}
			\leq
			C_1 a\left(\norm{\grad u}_{L^n(B_\delta(p))}+\frac{1}{\delta}\norm{u}_{L^n(B_\delta(p))}\right)
			\leq
			C_1 a\left(\norm{\grad u}_{L^n(M)}+\frac{1}{\delta}\norm{u}_{L^n(M)}\right).
		\end{equation}
		
		\medskip
		\noindent\textbf{Step 3: Choose $a$ uniformly so that $\norm{\grad w}\leq \theta$.}
		Assume $\norm{u}_{1,\tau}\leq 1$, i.e.
		$$
		\tau\norm{u}_{L^n(M)}+\norm{\grad u}_{L^n(M)}\leq 1.
		$$
		Fix a margin $\theta:=\frac12$ and choose
		\begin{equation}\label{eq:defa}
			a:=\frac{1}{2C_1\left(\norm{\grad u}_{L^n(M)}+\frac{1}{\delta}\norm{u}_{L^n(M)}\right)}.
		\end{equation}
		Then \eqref{eq:gradw} gives
		\begin{equation}\label{eq:gradwtheta}
			\norm{\grad w}_{L^n(\R^n)}\leq \theta=\frac12.
		\end{equation}
		(Here $a$ depends on $u$, but the final bound will be uniform because $a^n\norm{u}_{L^n}^n$ is uniformly controlled by $\norm{u}_{1,\tau}\leq 1$.)
		
		\medskip
		\noindent\textbf{Step 4: Apply the asymptotic estimates for the subcritical Trudinger-Moser inequality at a fixed subcritical parameter.}
		Define
		$$
		z:=\frac{w}{\theta}\qquad\Bigl(\text{so that }\ \norm{\grad z}_{L^n(\R^n)}\leq 1\ \text{by \eqref{eq:gradwtheta}}\Bigr).
		$$
		Then
		$$
		\alpha_n\abs{w}^{\frac{n}{n-1}}
		=
		\alpha_n\theta^{\frac{n}{n-1}}\abs{z}^{\frac{n}{n-1}}
		=: \alpha\,\abs{z}^{\frac{n}{n-1}},
		\qquad
		\alpha:=\alpha_n\theta^{\frac{n}{n-1}}<\alpha_n,
		$$
		where $\alpha$ is a \emph{fixed} subcritical constant depending only on $n$.
		By the definition of $\mathrm{AT}(\alpha,0)$,
		\begin{equation}\label{eq:ATapply}
			\int_{\R^n}\phi_n\!\left(\alpha_n\abs{w}^{\frac{n}{n-1}}\right)\,dx
			=
			\int_{\R^n}\phi_n\!\left(\alpha\abs{z}^{\frac{n}{n-1}}\right)\,dx
			\leq
			\mathrm{AT}(\alpha,0)\,\norm{z}_{L^n(\R^n)}^n
			=
			\mathrm{AT}(\alpha,0)\,\theta^{-n}\,\norm{w}_{L^n(\R^n)}^n.
		\end{equation}
		Since $\alpha$ is fixed and $\alpha<\alpha_n$, $\mathrm{AT}(\alpha,0)$ is a finite constant depending only on $n$.
	 
		\medskip
		\noindent\textbf{Step 5: Uniform control of $\norm{w}_{L^n}$ and conclusion.}
		
		Because $0\leq \eta\leq 1$ and $w=\eta v$ on $B_r$,
		$$
		\norm{w}_{L^n(\R^n)}\leq \norm{v}_{L^n(B_r)}=a\,\norm{\tilde u}_{L^n(B_r)}\leq C_2\,a\,\norm{u}_{L^n(B_\delta(p))}\leq C_2\,a\,\norm{u}_{L^n(M)}.
		$$
		Substituting into \eqref{eq:ATapply} yields
		$$
		\int_{\R^n}\phi_n\!\left(\alpha_n\abs{w}^{\frac{n}{n-1}}\right)\,dx
		\leq
		C_3(n)\,a^n\,\norm{u}_{L^n(M)}^n.
		$$
		Now use the definition \eqref{eq:defa}. 
		
		Since
		$$
		\norm{\grad u}_{L^n(M)}+\frac{1}{\delta}\norm{u}_{L^n(M)}
		\geq
		\frac{1}{\delta}\norm{u}_{L^n(M)},
		$$
		we obtain
		 
		$$
		a
		=\frac{1}{2C_1\left(\norm{\grad u}_{L^n(M)}+\frac{1}{\delta}\norm{u}_{L^n(M)}\right)}
		\leq \frac{\delta}{2C_1}\,\frac{1}{\norm{u}_{L^n(M)}}.
		$$
		Therefore
		$$
		a^n\,\norm{u}_{L^n(M)}^n\leq \left(\frac{\delta}{2C_1}\right)^n,
		$$
		and we conclude
		$$
		\int_{\R^n}\phi_n\!\left(\alpha_n\abs{w}^{\frac{n}{n-1}}\right)\,dx
		\leq C_4(n,\Lambda)\,\delta^n.
		$$
		Combining with \eqref{eq:reduce} gives the Euclidean local bound on $B_{r/2}$.
		Finally, pulling back by $\varphi$ and using Jacobian comparability yields
		$$
		\int_{B_{\delta/2}(p)}\phi_n\!\left(\alpha_n\abs{u}^{\frac{n}{n-1}}\right)\,dV_g
		\leq C(n,\tau,\Lambda),
		$$
		with $C$ independent of $u$ and $p$ (after fixing $\delta$ depending only on $(n,\tau,\Lambda)$).
		This proves the theorem.
	\end{proof}
	
	\medskip
	 
	 Next we need to putting together the local estimates over balls to get the estimates over $M$.

	\begin{lem}[Cutoff bounds]\label{lem:cutoff}
		Let $\{x_j\}$ be as in Lemma~\ref{harmonic coordinates}. For each $j$ choose $\psi_j\in C_c^\infty(B_\delta(x_j))$
		such that $0\leq \psi_j\leq 1$, $\psi_j\equiv 1$ on $B_{\delta/2}(x_j)$, and $\abs{\grad\psi_j}\leq 4/\delta$.
		Then $\abs{\grad(\psi_j^2)}\leq 8\psi_j/\delta$ and for every $u\in W^{1,n}(M)$ and every $\tilde\tau\geq 0$,
		\begin{equation}\label{eq:local-norm}
			\norm{\psi_j^2 u}_{1,\tilde\tau}
			\leq
			\norm{\grad u}_{L^n(M)}+\Bigl(\tilde\tau+\frac{8}{\delta}\Bigr)\norm{u}_{L^n(M)}.
		\end{equation}
		Moreover,
		\begin{equation}\label{eq:sum-overlap}
			\sum_j \int_M \psi_j^{2n}\abs{\grad u}^n\,dV_g \leq N\int_M \abs{\grad u}^n\,dV_g,
			\qquad
			\sum_j \int_M \abs{\grad(\psi_j^2)}^n\abs{u}^n\,dV_g \leq \frac{8^n}{\delta^n}\,N\int_M \abs{u}^n\,dV_g.
		\end{equation}
	\end{lem}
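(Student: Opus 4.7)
The plan is to reduce the lemma to three elementary ingredients: the chain rule $\grad(\psi_j^2)=2\psi_j\grad\psi_j$, the Leibniz rule $\grad(\psi_j^2 u)=\psi_j^2\grad u+u\,\grad(\psi_j^2)$, and the finite overlap property of the family $\{B_\delta(x_j)\}$ with multiplicity at most $N=N(n,\lambda,\delta)$ recorded in the last bullet following Lemma~\ref{harmonic coordinates}. No deeper machinery is required; this lemma is a bookkeeping step bridging the local estimate of Theorem~\ref{thm:uniform} and the global Theorem~\ref{maintheorem}.

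First I would verify the pointwise bound for $\grad(\psi_j^2)$. The chain rule together with $\abs{\grad\psi_j}\le 4/\delta$ and $0\le\psi_j\le 1$ gives
$$
\abs{\grad(\psi_j^2)}=2\psi_j\abs{\grad\psi_j}\le \frac{8\,\psi_j}{\delta},
$$
which is the first claim. In particular this implies the cruder bound $\abs{\grad(\psi_j^2)}\le 8/\delta$ on the support of $\psi_j$, which I will use in the overlap sum.

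Next, for \eqref{eq:local-norm}, I would expand $\grad(\psi_j^2 u)=\psi_j^2\grad u+u\grad(\psi_j^2)$ and apply the triangle inequality in $L^n(M)$. Using $0\le\psi_j^2\le 1$ in the first term and $\abs{\grad(\psi_j^2)}\le 8/\delta$ in the second yields
$$
\norm{\grad(\psi_j^2 u)}_{L^n(M)}\le \norm{\grad u}_{L^n(M)}+\frac{8}{\delta}\norm{u}_{L^n(M)}.
$$
Since $\norm{\psi_j^2 u}_{L^n(M)}\le \norm{u}_{L^n(M)}$, adding $\tilde\tau$ times this trivial bound to the previous inequality reproduces exactly the right-hand side of \eqref{eq:local-norm}.

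Finally, for \eqref{eq:sum-overlap} the key observation is that each $\psi_j$ is supported in $B_\delta(x_j)$, so at every $x\in M$ at most $N$ of the functions $\psi_j$ are nonzero. Combined with $\psi_j^{2n}\le 1$ this gives the pointwise inequality $\sum_j \psi_j^{2n}(x)\le N$, and integrating against $\abs{\grad u}^n\,dV_g$ produces the first estimate. For the second estimate I would use $\abs{\grad(\psi_j^2)}^n\le (8/\delta)^n$ on the support of $\psi_j$ together with the same multiplicity bound to obtain $\sum_j\abs{\grad(\psi_j^2)}^n\le (8/\delta)^n N$ pointwise, then integrate against $\abs{u}^n\,dV_g$. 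The computation is routine; the one point that warrants care is simply keeping the constants $4$ and $8$ consistent between the $\psi_j$- and $\psi_j^2$-bounds.
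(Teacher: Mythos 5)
Your proof is correct and follows essentially the same route as the paper: the Leibniz rule plus the triangle inequality in $L^n$ for \eqref{eq:local-norm}, and the chain rule together with the bounded-overlap number $N$ for the pointwise sums in \eqref{eq:sum-overlap}. You in fact spell out the overlap estimates, which the paper's proof leaves implicit, but there is no substantive difference in method.
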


	 \begin{proof}
	 	For \eqref{eq:local-norm},  compute
	 	$$
	 	\grad(\psi_j^2u)=\psi_j^2\grad u + u\,\grad(\psi_j^2),
	 	$$
	 	hence by the triangle inequality in $L^n$,
	 	$$
	 	\norm{\grad(\psi_j^2u)}_{L^n(M)}
	 	\leq \norm{\psi_j^2\grad u}_{L^n(M)}+\norm{u\,\grad(\psi_j^2)}_{L^n(M)}
	 	\leq \norm{\grad u}_{L^n(M)}+\frac{8}{\delta}\norm{\psi_j u}_{L^n(M)}
	 	\leq \norm{\grad u}_{L^n(M)}+\frac{8}{\delta}\norm{u}_{L^n(M)}.
	 	$$
	 	Also $\norm{\psi_j^2u}_{L^n}\leq \norm{u}_{L^n}$, so adding $\tilde\tau\norm{\psi_j^2u}_{L^n}$
	 	gives \eqref{eq:local-norm}. The implication  
	 	$$	\norm{\psi_j^2 u}_{1,\tilde{\tau}}
	 	\leq 1$$ 	 	
	 	 follows immediately from
	 	$\tau\geq \tilde\tau+\frac{8}{\delta}$ and $\norm{u}_{1,\tau}\leq 1$.

	 \end{proof}
	 
	\begin{thm}[Global Moser--Trudinger from the local estimate] \ref{harmonic coordinates}
		Assume the hypotheses of Lemma~ \ref{harmonic coordinates}.
		Fix $\delta\in(0,r_H]$ and assume the following \emph{local} estimate holds:
		
		\smallskip
		
		\noindent\emph{\bf (Local MT at scale $\delta$).} There exist $\tilde\tau\ge 0$ and $C_{\mathrm{loc}}>0$
		(depending only on $n$ and the geometric data) such that for every $p\in M$ and every
		$f\in W^{1,n}(M)$ supported in $B_\delta(p)$ with $\norm{f}_{1,\tilde\tau}\leq 1$,
		\begin{equation}\label{eq:local}
			\int_{B_{\delta/2}(p)} \phi_n\!\left(\alpha_n\abs{f}^{\frac{n}{n-1}}\right)\,dV_g
			\leq C_{\mathrm{loc}}.
		\end{equation}
		
		\smallskip
		Then for every $\tau\ge \tilde\tau+\frac{8}{\delta}$ there exists $C_{\mathrm{glob}}<\infty$
		depending only on $(n,\tau)$ and the geometric data such that for all $u\in W^{1,n}(M)$ with
		$\norm{u}_{1,\tau}\leq 1$,
		\begin{equation}\label{eq:global}
			\int_M \phi_n\!\left(\alpha_n\abs{u}^{\frac{n}{n-1}}\right)\,dV_g \leq C_{\mathrm{glob}}.
		\end{equation}
	\end{thm}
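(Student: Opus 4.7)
The plan is to apply the local estimate \eqref{eq:local} to the cutoffs $f_j:=\psi_j^2 u$ furnished by the Gromov covering, and then control the summation by splitting $\phi_n$ into a polynomial leading part (automatically summable through $\norm{u}_{L^n(M)}^n$) and an exponential tail supported on the level set $\{|u|>1\}$, which I expect can be covered by only finitely many balls of the Gromov cover.

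First I set $f_j:=\psi_j^2 u$. Lemma~\ref{lem:cutoff} combined with $\tau\ge\tilde\tau+8/\delta$ and $\norm{u}_{1,\tau}\le 1$ gives $\norm{f_j}_{1,\tilde\tau}\le 1$. Since $\psi_j\equiv 1$ on $B_{\delta/2}(x_j)$ we have $f_j=u$ there, so the local hypothesis yields
\[
\int_{B_{\delta/2}(x_j)}\phi_n\!\left(\alpha_n|u|^{n/(n-1)}\right)dV_g \le C_{\mathrm{loc}}\qquad\text{for every }j.
\]
Using the Taylor development $\phi_n(s)=s^{n-1}/(n-1)!+\phi_{n+1}(s)$ and the elementary pointwise bound $\phi_n(\alpha_n t^{n/(n-1)})\le C_n t^n$ on $[0,1]$, I split
\[
\int_M\phi_n\!\left(\alpha_n|u|^{n/(n-1)}\right)dV_g \le C_n\norm{u}_{L^n(M)}^n + \int_{\{|u|>1\}}\phi_n\!\left(\alpha_n|u|^{n/(n-1)}\right)dV_g,
\]
and the first piece is at most $C_n/\tau^n$ since $\tau\norm{u}_{L^n(M)}\le\norm{u}_{1,\tau}\le 1$.

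For the tail, set $J_1:=\{\,j:B_{\delta/2}(x_j)\cap\{|u|>1\}\neq\emptyset\,\}$; using that the $\{B_{\delta/2}(x_j)\}_j$ cover $M$,
\[
\int_{\{|u|>1\}}\phi_n\!\left(\alpha_n|u|^{n/(n-1)}\right)dV_g \le \sum_{j\in J_1}\int_{B_{\delta/2}(x_j)}\phi_n\!\left(\alpha_n|u|^{n/(n-1)}\right)dV_g \le |J_1|\,C_{\mathrm{loc}}.
\]
The main obstacle is a uniform bound on $|J_1|$, which I would obtain by packing: the balls $\{B_{\delta/4}(x_j)\}_{j\in J_1}$ are pairwise disjoint with $|B_{\delta/4}(x_j)|\ge v_0(n,\lambda,i,\delta)>0$ by Bishop--Gromov comparison together with the injectivity-radius bound, while for each $j\in J_1$ a choice of $y_j\in B_{\delta/2}(x_j)\cap\{|u|>1\}$ gives $B_{\delta/4}(x_j)\subset B_{3\delta/4}(y_j)\subset N_{3\delta/4}(\{|u|>1\})$, so that $|J_1|\,v_0 \le |N_{3\delta/4}(\{|u|>1\})|$. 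Combining $|\{|u|>1\}|\le\norm{u}_{L^n(M)}^n\le\tau^{-n}$ with a Vitali-type covering of $\{|u|>1\}$ by geodesic balls of radius $\delta/5$ (whose controlled expansions follow from the doubling property of the Riemannian measure coming from the Ricci lower bound) bounds $|N_{3\delta/4}(\{|u|>1\})|\le C(n,\lambda,i,\delta,\tau)$, hence $|J_1|\le C/v_0$ and $\int_M\phi_n\le C_n/\tau^n+|J_1|C_{\mathrm{loc}}=:C_{\mathrm{glob}}$, proving \eqref{eq:global}. The subtle point in this packing step is that the naive inequality $|N_\rho(E)|\lesssim|E|$ fails for arbitrary $E\subset M$, so one must exploit both the doubling property and the structure of $\{|u|>1\}$ as a super-level set of a $W^{1,n}$ function whose $L^n$ and gradient norms are controlled by $\norm{u}_{1,\tau}\le 1$.
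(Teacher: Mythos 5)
The first half of your argument (the pointwise bound $\phi_n(\alpha_n t^{n/(n-1)})\le C_n t^n$ for $t\le 1$, giving the $C_n\tau^{-n}$ contribution from $\{|u|\le 1\}$) is fine, but the tail estimate has a genuine gap that cannot be repaired along the lines you indicate: the cardinality $|J_1|$ of covering balls meeting $\{|u|>1\}$ is \emph{not} uniformly bounded under $\norm{u}_{1,\tau}\le 1$. The constraint only controls the measure of the level set and the $W^{1,n}$ norm, neither of which prevents the level set from being scattered across arbitrarily many balls. Concretely, take $K$ points $p_1,\dots,p_K$ on the (noncompact, complete) manifold with pairwise distance $\ge 2\delta$ and let $u$ be a sum of truncated logarithmic bumps $\min\bigl(1,\log(\delta/d(x,p_k))/\log(\delta/r)\bigr)$ (slightly rescaled so that $u>1$ near each $p_k$). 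Each bump has $\norm{\grad\cdot}_{L^n}^n\sim(\log(\delta/r))^{1-n}$ and $L^n$-norm tending to $0$ as $r\to 0$, so for any $K$ one can choose $r=r(K)$ small enough that $\norm{u}_{1,\tau}\le 1$, while $\{|u|>1\}$ meets at least on the order of $K$ distinct balls $B_{\delta/2}(x_j)$. The same example shows your intermediate claim $\abs{N_{3\delta/4}(\{|u|>1\})}\le C(n,\lambda,i,\delta,\tau)$ is false: the $3\delta/4$-neighborhood of $K$ widely separated tiny blobs has volume $\gtrsim K\delta^n$, and neither doubling nor a Vitali covering of the level set can recover a bound independent of $K$, because the $n$-capacity of small balls tends to zero. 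So the "packing" step, which you yourself flag as the subtle point, is exactly where the argument breaks, and bounding each ball's contribution by the worst-case constant $C_{\mathrm{loc}}$ discards the information needed to sum.

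The paper closes this by never invoking a cardinality bound: it upgrades the local estimate to its homogeneous form, namely $\int_{B_{\delta/2}(p)}\phi_n(\alpha_n\abs{f}^{n/(n-1)})\,dV_g\le C'_{\mathrm{loc}}\norm{f}_{1,\tilde\tau}^n$ for $f$ supported in $B_\delta(p)$ with $\norm{f}_{1,\tilde\tau}\le 1$ (this follows from the inhomogeneous bound applied to $f/\norm{f}_{1,\tilde\tau}$ together with $\phi_n(\lambda s)\le\lambda^{n-1}\phi_n(s)$ for $0\le\lambda\le 1$, taking $\lambda=\norm{f}_{1,\tilde\tau}^{n/(n-1)}$). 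Applying this to every $f=\psi_j^2u$ and summing over \emph{all} $j$, the series converges because $\sum_j\norm{\psi_j^2u}_{1,\tilde\tau}^n\le C(n)N\bigl(\int_M\abs{\grad u}^n+(8^n\delta^{-n}+\tilde\tau^n)\int_M\abs{u}^n\bigr)$ by the bounded overlap $N$ of the Gromov cover, which is uniformly controlled by $\norm{u}_{1,\tau}\le 1$. In your scattered-bump scenario each summand is then tiny rather than of size $C_{\mathrm{loc}}$, which is precisely the point your worst-case bound loses. If you want to salvage your level-set formulation, you would still need this homogeneous (or some quantitative, norm-dependent) version of the local inequality; a purely measure-theoretic packing of $\{|u|>1\}$ cannot suffice.
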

	
	\begin{proof}
		Let $\{x_j\}$ and $\{\psi_j\}$ be as in Lemmas~ \ref{harmonic coordinates} and \ref{lem:cutoff}.
		Since $M=\bigcup_j B_{\delta/2}(x_j)$ and $\psi_j\equiv 1$ on $B_{\delta/2}(x_j)$,
		we have pointwise on $B_{\delta/2}(x_j)$:
		$$
		\abs{u}=\abs{\psi_j^2u}.
		$$
		Using that $\phi_n$ is increasing on $[0,\infty)$, we obtain
		\begin{equation}\label{eq:split}
			\int_M \phi_n\!\left(\alpha_n\abs{u}^{\frac{n}{n-1}}\right)\,dV_g
			\leq
			\sum_j \int_{B_{\delta/2}(x_j)} \phi_n\!\left(\alpha_n\abs{u}^{\frac{n}{n-1}}\right)\,dV_g
			=
			\sum_j \int_{B_{\delta/2}(x_j)} \phi_n\!\left(\alpha_n\abs{\psi_j^2u}^{\frac{n}{n-1}}\right)\,dV_g.
		\end{equation}
		
		\medskip
		\noindent\textbf{Apply the local estimate with correct scaling.}
		Assume $\tau\ge \tilde\tau+\frac{8}{\delta}$ and $\norm{u}_{1,\tau}\leq 1$.
		Then Lemma~\ref{lem:cutoff} gives $\norm{\psi_j^2u}_{1,\tilde\tau}\leq 1$ for all $j$,
		and $\psi_j^2u$ is supported in $B_\delta(x_j)$.
		Thus we may apply \eqref{eq:local} with $p=x_j$ and $f=\psi_j^2u$ to obtain
		$$
		\int_{B_{\delta/2}(x_j)} \phi_n\!\left(\alpha_n\abs{\psi_j^2u}^{\frac{n}{n-1}}\right)\,dV_g
		\leq C_{\mathrm{loc}}.
		$$
		To globalize, we use the standard homogeneous consequence of the local estimate:
		
		\smallskip
		\noindent\emph{Homogeneous local bound.}
		There exists $C'_{\mathrm{loc}}>0$ such that for all $p\in M$ and all $f$ supported in $B_\delta(p)$ and satisfying $\norm{f}_{1,\tilde\tau}\leq 1$,
		\begin{equation}\label{eq:local-homog}
			\int_{B_{\delta/2}(p)} \phi_n\!\left(\alpha_n\abs{f}^{\frac{n}{n-1}}\right)\,dV_g
			\leq C'_{\mathrm{loc}}\;\ \norm{f}_{1,\tilde\tau}^{\,n}.
		\end{equation}
	 
		Applying \eqref{eq:local-homog} to $f=\psi_j^2u$ and summing in \eqref{eq:split} gives
		$$
		\int_M \phi_n\!\left(\alpha_n\abs{u}^{\frac{n}{n-1}}\right)\,dV_g
		\leq
		C'_{\mathrm{loc}}\sum_j \norm{\psi_j^2u}_{1,\tilde\tau}^{\,n}.
		$$
		
		\medskip
		\noindent\textbf{Estimate the sum of localized norms using bounded overlap.}
		Using $$(A+B)^n\leq 2^{n-1}(A^n+B^n),$$ the product rule, and $\psi_j\leq 1$,
		$$
		\norm{\psi_j^2u}_{1,\tilde\tau}^{\,n}
		\leq C(n)\int_M \psi_j^{2n}\abs{\grad u}^n + \abs{\grad(\psi_j^2)}^n\abs{u}^n + \tilde\tau^n \psi_j^{2n}\abs{u}^n\,dV_g.
		$$
		Summing over $j$ and using Lemma~\ref{lem:cutoff}  and $\sum_j\psi_j^{2n}\leq N$) yields
		$$
		\sum_j \norm{\psi_j^2u}_{1,\tilde\tau}^{\,n}
		\leq
		C(n)N\int_M \abs{\grad u}^n\,dV_g
		+
		C(n)N\left(\frac{8^n}{\delta^n}+\tilde\tau^n\right)\int_M \abs{u}^n\,dV_g.
		$$
		If $\norm{u}_{1,\tau}\leq 1$ then $\int_M\abs{\grad u}^n\leq 1$ and $\int_M\abs{u}^n\leq \tau^{-n}$.
		Therefore
		$$
		\int_M \phi_n\!\left(\alpha_n\abs{u}^{\frac{n}{n-1}}\right)\,dV_g
		\leq
		C'_{\mathrm{loc}}\,C(n)N\left(1+\left(\frac{8^n}{\delta^n}+\tilde\tau^n\right)\tau^{-n}\right)
		=:C_{\mathrm{glob}},
		$$
		which is finite and depends only on $(n,\tau)$ and the geometric data.
		This proves \eqref{eq:global}.
	\end{proof}

	\section{Green's function estimate: Proof of Theorem \ref{Green}} 
	
	For $u \in W^{1,n}_0(B_\delta (x_j))$, we recall the pointwise representation formula of $u(x)$ in terms of the Dirichlet Green's function:
	
	$$
	|u(x)| \leq \int_{B_\delta (x_j)} |\nabla G(x , y)| |\nabla u (y)| dV_g(y).
	$$
	The existence of the Green's function comes from a classical paramatrix argument (see e.g. Chapter 4 of Aubin \cite{Aubin}). To be precise, for some positive integer $k > \frac{n}{2}$, 
	
	$$
	G(x, y ) = H(x, y ) + \sum_{l = 1}^k \int_{B_\delta (x_j)} \Gamma_l (x, z) H (z , y) dV_g(z) + F (x, y ),
	$$
	where $H(x , y) = \frac{1}{(n-2) \omega_{n-1} } d(x, y )^{2- n} f(d(x,y))$ is the fundamental solution of Laplacian multiplied by a smooth cutoff function $f$, which is identically equal to 1 when $d(x , y) < \frac{\delta}{2}$ and vanishes when $d(x,y) \geq \delta$. Moreover, $f$ can be chosen such that $|f'| \leq \frac{C}{\delta}, |f''| \leq \frac{C}{\delta^2}$ for some uniform constant $C > 0$. $\Gamma (x ,y ) = \Gamma_1 (x , y) = - \Delta_y H(x , y)$ and we inductively define 
	
	$$
	\Gamma_{l + 1} (x , y ) = \int_{B_\delta (x_j)} \Gamma_l (x , z) \Gamma (z , y) dV_g(z).
	$$
	The error term $F$ satisfies the equation
	
	\begin{equation}\label{error term}
		\begin{cases}
			\Delta_y F(x , y) = \Gamma_{k+1} (x , y) , \\
			\ F(x , y) = 0 \ \ \  \textit{for } y \in \partial B_\delta(x_j).
		\end{cases}
	\end{equation}
	We claim that 
	
	\begin{equation}\label{Green estimate}
		|\nabla_y G(x , y)| \leq  \frac{1}{\omega_{n-1}} d(x , y)^{1-n} (1 + C d(x , y)),
	\end{equation}
	where $C = C(n , \lambda , Q , \delta)$. Indeed, from the definition of $\Gamma_l(x, y )$, their singularity decrease as $l$ grows hence we first look at 
	$$
	\int_{B_\delta(x_j)} \Gamma(x , z) H(z , y) dV_g(z).
	$$
	For the asymptotic behavior near singularity, we need to prove the following lemma:
	
	\begin{lem}\label{Singularity Lemma}
		For any $\alpha , \beta > 0$ such that $\alpha + \beta < n$, there exists a constant $C = C(n , \alpha , \beta  , Q)$ such that 
		
		$$
		\int_{B_\delta(x_j)} d(x , z)^{\alpha - n} d(z , y)^{\beta - n} dV_g(z) \leq C d(x, y )^{\alpha + \beta - n}.
		$$
	\end{lem}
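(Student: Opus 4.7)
The plan is to reduce the estimate to its Euclidean analogue via the harmonic coordinates on $B_\delta(x_j)$. Since the ball lies inside a single harmonic chart $\varphi_j$ whose metric satisfies $Q^{-1}\de_{ij}\leq g_{ij}\leq Q\de_{ij}$, both the Riemannian distance and the volume form are comparable to their Euclidean counterparts with constants depending only on $(n,Q)$. Writing $\tilde x=\varphi_j(x)$, $\tilde y=\varphi_j(y)$, and $\tilde z=\varphi_j(z)$, the integral in question is bounded by
$$
C(n,Q)\int_{\tilde B}\abs{\tilde x-\tilde z}^{\alpha-n}\abs{\tilde z-\tilde y}^{\beta-n}\,d\tilde z,
$$
where $\tilde B:=\varphi_j(B_\de(x_j))\subset\R^n$ has Euclidean diameter $\leq C(Q)\de$. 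Once I prove the Euclidean bound by $C(n,\alpha,\beta)r^{\alpha+\beta-n}$ with $r:=\abs{\tilde x-\tilde y}$, the comparability $d(x,y)\sim\abs{\tilde x-\tilde y}$ gives the lemma.

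For the Euclidean estimate, I would partition the region of integration into three zones according to the location of $\tilde z$ relative to $\tilde x$ and $\tilde y$.
\emph{Zone I} is $\{\abs{\tilde x-\tilde z}\leq r/2\}$; here $\abs{\tilde z-\tilde y}\geq r/2$ by the triangle inequality, and polar integration around $\tilde x$ yields a contribution $\lesssim r^{\beta-n}\int_0^{r/2}\rho^{\alpha-1}d\rho\sim r^{\alpha+\beta-n}$.
\emph{Zone II} is $\{\abs{\tilde z-\tilde y}\leq r/2\}$, symmetric to Zone I and bounded identically.
\emph{Zone III} is the complement, where both $\abs{\tilde x-\tilde z}$ and $\abs{\tilde z-\tilde y}$ exceed $r/2$. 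I would split this further into the annulus $\mathrm{III_a}=\{r/2\leq\abs{\tilde z-\tilde y}\leq 4r\}$, whose volume is $\lesssim r^n$ and on which the integrand is $\lesssim r^{\alpha+\beta-2n}$, giving $\lesssim r^{\alpha+\beta-n}$; and the tail $\mathrm{III_b}=\{\abs{\tilde z-\tilde y}\geq 4r\}\cap\tilde B$, where $\abs{\tilde x-\tilde z}\geq\abs{\tilde z-\tilde y}-r\geq\tfrac12\abs{\tilde z-\tilde y}$, so the integrand is $\lesssim\abs{\tilde z-\tilde y}^{\alpha+\beta-2n}$ and polar integration around $\tilde y$ gives
$$
\int_{4r}^{C\de}\rho^{\alpha+\beta-n-1}\,d\rho
\leq \frac{(4r)^{\alpha+\beta-n}}{n-\alpha-\beta},
$$
where I use $\alpha+\beta-n<0$ so the dominant term comes from the lower limit and the $\delta$-dependence drops out. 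If $r>\de/2$ then $\mathrm{III_b}$ is empty. Summing the four contributions gives the Euclidean bound $C(n,\alpha,\beta)\,r^{\alpha+\beta-n}$.

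The main technical point, and really the only one requiring care, is the independence of the final constant from $\de$: one must verify that the tail Zone III$_\mathrm{b}$ integral is controlled by $r^{\alpha+\beta-n}$ without a $\de^{\alpha+\beta-n}$ correction, which works precisely because $\alpha+\beta<n$ makes the primitive decrease in $\rho$. Pulling back through $\varphi_j$ using $d(x,y)\leq Q\abs{\tilde x-\tilde y}$ then yields the claimed estimate with $C=C(n,\alpha,\beta,Q)$.
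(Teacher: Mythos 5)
Your proposal is correct and follows essentially the same route as the paper: a three-zone decomposition around the two singular points plus a far-field tail controlled by $\alpha+\beta<n$, with the harmonic-coordinate comparability $Q^{-1}\delta_{ij}\leq g_{ij}\leq Q\delta_{ij}$ used to pass to Euclidean polar integration. The only cosmetic difference is that you transfer the whole integral to the Euclidean chart at the outset, whereas the paper splits with respect to the Riemannian distance (regions $B_\rho(x)$, $B_{3\rho}(y)\setminus B_\rho(x)$, and the complement of $B_{3\rho}(y)$ with $\rho=d(x,y)/2$) and invokes the coordinate comparability region by region.
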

	
	\begin{proof}
		Let $\rho = \frac{d(x , y)}{2}$. Then 
		
		\begin{align}\label{convolution identity}
			&\int_{B_\delta(x_j)} d(x , z)^{\alpha - n} d(z , y)^{\beta - n} dV_g(z) \\
			= &\left(  \int_{B_\rho(x) \cap B_\delta(x_j)} +  \int_{ (B_{3\rho} (y) \setminus B_\rho (x) ) \cap B_\delta(x_j)} +  \int_{B_\delta(x_j) \setminus B_{3\rho} (y)} \right) d(x , z)^{\alpha - n} d(z , y)^{\beta - n} dV_g(z).
		\end{align}
		The first term of the right hand side can be estimated as
		
		\begin{align*}
			&\int_{B_\rho(x) \cap B_\delta(x_j)} d(x , z)^{\alpha - n} d(z , y)^{\beta - n} dV_g(z) \\
			\leq & \rho^{\beta - n} \int_{B_\rho(x) \cap B_\delta(x_j)} d(x , z)^{\alpha -n } dV_g(z), \\ 
			\leq & C(\lambda , \delta, Q) \omega_{n-1} \rho^{\beta - n} \int_0^\rho r^{\alpha - n} r^{n-1} dr, \\
			\leq &C(\lambda, \delta, Q) \omega_{n-1} \rho^{\alpha + \beta - n},
		\end{align*}
		where from the second line to the third line we use the property that in harmonic coordinate, $Q^{-1} \delta_{ij} \leq g_{ij} \leq Q \delta_{ij}$ as bilinear form. Hence integral can be transferred to normal coordinate and further to polar coordinate. Meanwhile it is worth to emphasize that we omit the transition function between harmonic coordinate charts centered at $x_j$ and centered at $x$, since it is a $C^{1, \alpha}$ function and can just be treated as a uniform constant. We will also apply such treatment in future estimates.
		Similarly we can estimate the second term of the right hand side of \eqref{convolution identity}. For the third term, we have
		
		\begin{align}
			& \int_{B_\delta(x_j) \setminus B_{3\rho} (y)} d(x , z)^{\alpha - n} d(z , y)^{\beta - n} dV_g(z) \\
			\leq & C_n \int_{B_\delta(x_j) \setminus B_{3\rho} (y)} d(z , y)^{\alpha + \beta - 2n} dV_g(z), \\
			\leq &  C(n , \lambda, \delta, q)  \int_{3 \rho}^\delta r^{\alpha + \beta - 2n} r^{n-1} dr , \\
			\leq & C_{n , \alpha ,\beta , \lambda, \delta}  \omega_{n-1} ( (3\rho)^{\alpha + \beta - n} - \delta^{\alpha + \beta - n} ), \\
			\leq &C_{n , \alpha ,\beta , \lambda, \delta} \omega_{n-1} \rho^{\alpha + \beta - n}.
		\end{align}
	\end{proof}
	It is easy to verify that in polar coordinates, 
  $$
  \Delta_z H(x , z) = \frac{1}{(n-2) \omega_{n-1} r^{n-1}} \left( (n-3) f' - r f'' + ((n-2)f - rf') \left[ \Delta_z d(x,z) - \frac{n-1}{d(x,z)} \right] \right),
  $$
  where $r = d(x , z)$. By carefully choosing the smooth cut-off function $f$, the first two terms $|(n-3)f' - r f''|$ can be uniformly bounded by $C(n , \delta , i)$. With the help of Lemma \ref{Singularity Lemma}, we need to consider the integral
  $$
    \left| \int_{B_{\delta} (x_j)}  \frac{1}{d(x,z)^{n-1}} ((n-2)f - rf') \left[ \Delta_z d(x,z) - \frac{n-1}{d(x,z)} \right] \frac{1}{d(z,y)^{n-2}} dV_z \right|. 
  $$	
  First notice that $|(n-2)f - rf'| \leq C(n,\delta, i)$. Applying Bishop comparison theorem and G\"unther comparison theorem,
   $$
      (n-1) a \cot a d(x,z) \leq \Delta_ d(x,z) \leq (n-1) \sqrt{-\lambda} \coth \sqrt{-\lambda}  d(x,z).
   $$
   Using Taylor expansions for $\cot t$ and $\coth t$, there exists a uniform constant $C = C(n,\delta , i , a, \lambda)$ such that $|\Delta_z d(x,z) - \frac{n-1}{d(x,z)}| \leq C d(x,z)$. Therefore from Lemma \ref{Singularity Lemma}, $\Gamma_2 (x,y)$ is bounded by $d(x,y)^{n-3}$, up to some uniform constant. Then we argue inductively and hence conclude that $\Gamma_{k+1}$ together with its derivatives, are uniformly bounded by a uniform constant (for its derivatives, we leave the proof to readers). To obtain the uniform estimate for Green's function, it suffices to prove the uniform $C^1$ estimate of $F(x,y)$. 
	
	\begin{lem}
		Let $F(x,y)$ be a $W^{1,2}_0(B_\delta (x_j))$ solution of the Dirichlet problem \eqref{error term}, there exists a positive constant $C = C( n , \lambda , a,  i , Q )$ such that $|| \nabla_y F(x , y) ||_\infty \leq C$. 
	\end{lem}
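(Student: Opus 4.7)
The plan is to transfer the Dirichlet problem \eqref{error term} into the harmonic coordinate chart on $B_\delta(x_j)$ and apply standard elliptic regularity uniformly across $j$. The key observation is that in harmonic coordinates the Laplace-Beltrami operator reduces to the pure second-order form $\Delta_g = g^{ij}\partial_i\partial_j$, because the coordinate functions are themselves harmonic. By the bulleted list of Section 2, the coefficients $g^{ij}$ are uniformly elliptic (with constant $Q$) and uniformly $C^{0,\alpha}$ with norm controlled by $(n, Q, \alpha, \delta, i, \lambda)$. Thus $F$ satisfies the non-divergence equation $g^{ij}\partial_i\partial_j F = \tilde{\Gamma}_{k+1}$ with $F \equiv 0$ on $\partial B_\delta(x_j)$, and by the inductive argument sketched just above the lemma, the right-hand side $\tilde{\Gamma}_{k+1}$, together with its first derivatives, is uniformly bounded.

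The classical maximum principle (or the Alexandrov-Bakelman-Pucci estimate) immediately gives $\norm{F}_{L^\infty} \leq C \delta^2 \norm{\tilde{\Gamma}_{k+1}}_{L^\infty}$ uniformly in $j$. For interior gradient control I would invoke the Calder\'on-Zygmund $W^{2,p}$ theory for non-divergence elliptic operators with $C^{0,\alpha}$ coefficients: for any $p > n$,
$$
\norm{F}_{W^{2,p}(B_{3\delta/4}(x_j))} \leq C\bigl(\norm{\tilde{\Gamma}_{k+1}}_{L^p(B_\delta(x_j))} + \norm{F}_{L^p(B_\delta(x_j))}\bigr),
$$
and Morrey's embedding $W^{2,p} \hookrightarrow C^1$ converts this into a uniform interior bound on $\nabla F$. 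Since $\tilde{\Gamma}_{k+1}$ is in fact $C^\alpha$, one could alternatively appeal directly to interior Schauder estimates to obtain uniform $C^{2,\alpha}$ control.

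For the remaining boundary layer, I would construct a barrier of the form $v(y) = C_1 (\delta - d(x_j, y))$. Under the hypotheses $\textit{Ric} \geq \lambda g$ and $K \leq a^2$, the Bishop and G\"unther Laplacian comparison theorems (already invoked above for the estimate of $\Gamma_2$) provide two-sided pointwise bounds on $\Delta_g d(x_j, \cdot)$, so $\Delta_g v = -C_1\Delta_g d(x_j, \cdot)$ can be made strictly smaller than $-\norm{\tilde{\Gamma}_{k+1}}_{L^\infty}$ on a uniform annular collar of $\partial B_\delta(x_j)$ by choosing $C_1 = C_1(n, \lambda, a, i)$ sufficiently large. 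The comparison principle, applied on the collar with $|F| \leq \norm{F}_{L^\infty} \leq v$ on the inner boundary of the collar and $F = v = 0$ on $\partial B_\delta(x_j)$, then forces $|F(y)| \leq C_1(\delta - d(x_j,y))$ near the outer boundary. Combined with $F \equiv 0$ on $\partial B_\delta(x_j)$ this yields a uniform bound on the normal derivative at the boundary, and gluing with the interior estimate produces $\norm{\nabla_y F}_{L^\infty(B_\delta(x_j))} \leq C(n, \lambda, a, i, Q)$ independent of the center $x_j$.

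The principal obstacle is securing uniformity of every constant simultaneously in $j$. The Anderson-Cheeger harmonic coordinates take care of the ellipticity and $C^{0,\alpha}$ bounds on the coefficients needed to apply Calder\'on-Zygmund or Schauder with a location-independent constant; the two-sided Laplacian comparison furnished by the Ricci lower bound and the sectional curvature upper bound is what makes the boundary barrier work uniformly. Once both uniformities are secured, the argument above runs verbatim on every ball $B_\delta(x_j)$ and collapses into a single constant $C$ depending only on $(n,\lambda,a,i,Q)$.
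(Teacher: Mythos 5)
Your plan shares the paper's skeleton up to a point: both pass to the Anderson--Cheeger harmonic coordinates to get uniform ellipticity and uniform $C^{0,\alpha}$ control of the coefficients (your observation that $\Delta_g=g^{ij}\partial_i\partial_j$ there, with no first-order terms, is correct), and both get the $C^0$ bound from the maximum principle --- the paper does this with the explicit barrier $\frac{\|f\|_\infty}{A}(\delta^2-|x|^2)$, $A=2nQ^{-1}$, in harmonic coordinates, which is essentially your ABP step. Where you genuinely diverge is the gradient bound near $\partial B_\delta(x_j)$: the paper flattens the boundary inside the harmonic chart and invokes the Euclidean boundary $C^{1,\alpha}$ Schauder estimates of Gilbarg--Trudinger, patched by a partition of unity, whereas you build a collar barrier $v=C_1(\delta-d(x_j,\cdot))$ from the two-sided Laplacian comparison. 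That barrier does work with the stated curvature hypotheses: G\"unther's comparison gives $\Delta_g d\ge (n-1)a\cot(a\delta)>0$ on the collar for $\delta$ small, so $\Delta_g v$ can be made $\le -\|\Gamma_{k+1}\|_\infty$ and the comparison principle yields $|F|\le C_1(\delta-d(x_j,\cdot))$ there, with $C_1$ depending only on $(n,\lambda,a,i,Q)$ once the $C^0$ bound is in hand.

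The gap is in your final ``gluing'' step. The barrier controls $|F|$ by the distance to the boundary, hence the normal derivative \emph{at} points of $\partial B_\delta(x_j)$ (where the tangential gradient vanishes since $F\equiv 0$), but it says nothing directly about $|\nabla F|$ at interior points of the collar $\{3\delta/4<d(x_j,\cdot)<\delta\}$, and your Calder\'on--Zygmund/Schauder estimate only covers $B_{3\delta/4}(x_j)$; an interior bound plus a normal-derivative bound on the boundary sphere do not combine into a sup bound for $\nabla F$ on the open collar. To close this you need either (a) the standard rescaled interior estimate: for $y$ in the collar set $d_y=\delta-d(x_j,y)$, apply the interior estimate on $B_{d_y/2}(y)$, and use the barrier bound $\sup_{B_{d_y/2}(y)}|F|\le C\,d_y$ to absorb the factor $d_y^{-1}$, giving $|\nabla F(y)|\le C$ uniformly; or (b) genuine up-to-the-boundary $C^{1,\alpha}$ estimates, which is exactly the paper's route (boundary flattening plus Gilbarg--Trudinger). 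With (a) spelled out, your argument is complete, runs uniformly in $j$, and yields the same constant $C(n,\lambda,a,i,Q)$; as written, the jump from ``normal derivative bounded at the boundary'' to $\|\nabla_y F\|_\infty\le C$ is the missing step.
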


	\begin{proof}
		Since $\Gamma_{k+1}$ in \eqref{error term} has no singularity, it is smooth and hence $F$ is the classical smooth solution. We first establish the $C^0$ estimate of $F(x, y )$, this follows from a classical barrier argument: consider the following Dirichlet problem on $B_\delta(x_j)$:
		
		\begin{equation}
			\begin{cases}
				- \Delta u = f(x),  \ \ \ & x\in B_\delta(x_j); \\
				u = 0, \ \ \ &x \in \partial B_\delta(x_j).
			\end{cases}
		\end{equation}
		We are to show: there exists a constant $C = C(n , Q, \delta)$ such that 
		$$
		||u||_\infty \leq C ||f||_\infty.
		$$ 
		In fact, let $w(x) = u(x) - z(x)$, where 
		$$
		z(x) = \frac{||f||_\infty}{A} (\delta^2 - |x|^2),
		$$
		$x$ is the position variable in terms of harmonic coordinates, and the constant $A$ is left to be chosen. It is easy to verify that 
		$$
		-\Delta w = f(x)- \frac{||f||_\infty}{A} \Delta |x|^2.
		$$
		Since with harmonic coordinates, we have
		
		$$
		\Delta |x|^2 = 2 |\nabla x|^2 + x \Delta x \in [2n Q^{-1} , 2n Q],
		$$
		then we can set $A = 2n Q^{-1}$ and hence $- \Delta w \leq 0$. Since $w \leq 0$ on $\partial B_\delta(x_j)$, by maximum principle, we have 
		$$
		u(x) \leq z(x) \leq \frac{\delta^2 Q ||f||_\infty}{2n}.
		$$
		Repeat the above argument for $- u(x)$ then we have
		
		$$
		-u (x) \geq - z(x) \geq - \frac{\delta^2 Q ||f||_\infty}{2n}.
		$$
		Therefore we obtain the $C^0$ estimate of $F$. Now we proceed the $C^{1 , \alpha}$ Schauder estimate on $B_\delta(x_j)$ so we obtain the $L^\infty$ estimate of $\nabla_yF(x,y)$. The proof reduces the problem to Euclidean boundary $C^{1, \alpha}$ estimates via harmonic coordinates and a flattening of the boundary. The argument is classical and standard, we only sketch the proof.
		
		\noindent\textbf{Step 1: Reduction to a Euclidean domain.}  
 		Let $\Phi:B_r(p)\to \Omega\subset\mathbb{R}^n$ be the harmonic coordinate chart.  
 		Let $\tilde u = u\circ \Phi^{-1}$, $\tilde f=f\circ \Phi^{-1}$, and $\tilde \varphi = \varphi\circ \Phi^{-1}$.  
 		Then $\tilde u$ solves a uniformly elliptic equation in divergence form on $\Omega$:
 		$$
 		-\partial_i(\tilde a^{ij}(x)\partial_j\tilde u)+\tilde b^i(x)\partial_i\tilde u+\tilde c(x)\tilde u
 		=\tilde f(x)\quad \text{in }\Omega,
 		$$
 		with boundary data $\tilde u=\tilde\varphi$ on $\partial\Omega$.
 		The coefficients satisfy
 		$$
 		\tilde a^{ij},\tilde b^i,\tilde c\in C^\alpha(\overline{\Omega}),
 		\qquad
 		\|\tilde a^{ij}\|_{C^\alpha}+\|\tilde b^i\|_{C^\alpha}+\|\tilde c\|_{C^\alpha}
 		\leq C(n,K,i_0,r).
 		$$
 		
 		\medskip
		
 		\noindent\textbf{Step 2: Flattening the boundary.}  
 		Since $r<i_0$, the boundary $\partial B_r(p)$ is smooth, and thus $\partial\Omega$ is $C^{1,\alpha}$.  
 		Cover $\partial\Omega$ by finitely many coordinate patches $\{U_k\}$ such that each patch is mapped by a $C^{1,\alpha}$-diffeomorphism $\Psi_k$ to a half-ball $B_1^+\subset\mathbb{R}^n$.  
 		Let $\{\eta_k\}$ be a partition of unity subordinate to this cover.
 		
 		\medskip

                \noindent\textbf{Step 3: Apply Euclidean boundary $C^{1,\alpha}$ estimates.}  
 		Fix one patch $U_k$ and define $v_k=\tilde u\circ \Psi_k^{-1}$ on $B_1^+$.  
 		Then $v_k$ solves a uniformly elliptic equation in divergence form with $C^\alpha$ coefficients and boundary data $v_k=\varphi_k$ on $\partial B_1^+$.  
 		By the Euclidean boundary $C^{1,\alpha}$ estimate (Gilbarg--Trudinger \cite{GilbargTrudinger}), we have
 		$$
 		\|v_k\|_{C^{1,\alpha}(\overline{B_{1/2}^+})}
 		\leq C\Big(
 		\|v_k\|_{L^\infty(B_1^+)}
 		+\|\tilde f\|_{C^\alpha(B_1^+)}
 		+\|\varphi_k\|_{C^{1,\alpha}(\partial B_1^+)}
 		\Big).
 		$$

		\noindent\textbf{Step 4: Patch local estimates to the whole domain.}  
 		Multiplying by $\eta_k$ and summing over $k$, we obtain a global estimate on $\Omega$:
 		$$
 		\|\tilde u\|_{C^{1,\alpha}(\overline{\Omega})}
 		\leq C\Big(
 		\|\tilde u\|_{L^\infty(\Omega)}
 		+\|\tilde f\|_{C^\alpha(\Omega)}
 		+\|\tilde\varphi\|_{C^{1,\alpha}(\partial\Omega)}
 		\Big).
 		$$
 		Using the uniform $C^{1,\alpha}$ control of the coordinate chart $\Phi$, we obtain the desired estimate.
		
	\end{proof}

\section{ Alternative proof of Trudinger-Moser inequalities using the uniform estimates of Green functions }

In this section, we give some remarks about how to prove
Theorem \ref{maintheorem} under additional condition of sectional curvature being upper bounded using the uniform Green function estimates. we achieve this through 2 steps: establishing the uniform Trudinger-Moser inequality on each $B_\delta (x_j)$; gluing all inequalities from Step 1 so we extend our inequality from the local inequalities to a  global one. We first prove
	
	\begin{thm}\label{local}
		Let $B_\delta (x_j)$ be any ball mentioned above, then there exists a constant $C = C(n , \lambda, i)$ such that 
		
		$$
		\sup_{u \in W^{1,n}_0 (B_\delta (x_j)) , ||\nabla u||_n \leq 1} \int_{B_\delta (x_j)} \phi_n (\alpha_n |u|^{\frac{n}{n-1}}) dV_g \leq C.
		$$
	\end{thm}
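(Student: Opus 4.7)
The plan is to apply the Green's function estimate from Theorem \ref{Green} to the Dirichlet representation of $u \in W^{1,n}_0(B_\delta(x_j))$, thereby reducing the theorem to a sharp Moser--Trudinger inequality for a Riesz-type potential in which the leading coefficient is already $1/\omega_{n-1}$. Starting from the pointwise formula
$$|u(x)| \leq \int_{B_\delta(x_j)} |\nabla_y G(x,y)| \, |\nabla u(y)| \, dV_g(y)$$
recalled at the beginning of Section 4, I would insert the bound of Theorem \ref{Green} and split the right-hand side into a main term and a remainder,
$$|u(x)| \leq \frac{1}{\omega_{n-1}}\int_{B_\delta(x_j)} d(x,y)^{1-n}\,|\nabla u(y)|\, dV_g(y) + R(x), \quad R(x):=\frac{C}{\omega_{n-1}}\int_{B_\delta(x_j)} d(x,y)^{2-n}\,|\nabla u(y)|\, dV_g(y).$$

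First I would show the remainder $R$ is harmless. Since $(2-n)n/(n-1) > -n$, the kernel $d(x,y)^{2-n}$ lies in $L^{n/(n-1)}(B_\delta(x_j))$ with norm depending only on $\delta$, and hence only on $(n,\lambda,i)$ via Lemma~\ref{harmonic coordinates}. By H\"older's inequality, $\|R\|_{L^\infty(B_\delta(x_j))} \leq C_1$, uniform in $j$ and in $u$ with $\|\nabla u\|_n \leq 1$. Writing $v(x) := \omega_{n-1}^{-1}\int d(x,y)^{1-n}|\nabla u(y)|\, dV_g(y)$ and using $(a+b)^{n/(n-1)} \leq (1+\varepsilon)a^{n/(n-1)} + C_\varepsilon b^{n/(n-1)}$ together with monotonicity of $\phi_n$, the integral
$$\int_{B_\delta(x_j)} \phi_n\!\bigl(\alpha_n |u|^{n/(n-1)}\bigr) dV_g$$
is bounded, up to a multiplicative factor $\exp(C_\varepsilon \alpha_n C_1^{n/(n-1)})$, by $\int \phi_n\!\bigl(\alpha_n(1+\varepsilon) v^{n/(n-1)}\bigr) dV_g$.

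The main step is to control this Riesz-potential integral with the sharp constant. I would pass to harmonic coordinates on $B_\delta(x_j)$: by Lemma~\ref{harmonic coordinates} the metric tensor satisfies $Q^{-1}\delta_{ij}\leq g_{ij}\leq Q\delta_{ij}$ with $Q$ arbitrarily close to $1$, so $dV_g$ is comparable to Lebesgue measure and the geodesic distance is comparable to the Euclidean one. Then I invoke an Adams-type lemma for integral operators whose kernel lies in weak-$L^{n/(n-1)}$ (equivalently, O'Neil's rearrangement inequality followed by Moser's one-dimensional lemma), which for a Riesz potential with leading coefficient $1/\omega_{n-1}$ produces the sharp exponential integrability with constant $\alpha_n$. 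By fixing $Q$ close to $1$ and $\varepsilon$ correspondingly small, the product of distortion factors $(1+\varepsilon) Q^{n-1}$ stays strictly below the threshold where sharpness breaks, and we obtain $\int \phi_n(\alpha_n(1+\varepsilon) v^{n/(n-1)}) dV_g \leq C(n,\lambda,i)$.

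The principal obstacle is ensuring the sharp constant $\alpha_n$ survives after pulling back to Euclidean coordinates and combining with the remainder. This is precisely what the uniform estimate of Theorem~\ref{Green} buys us: the leading coefficient of $|\nabla_y G|$ is exactly the Euclidean value $1/\omega_{n-1}$, so all geometric distortion is relegated to the lower-order tail, which is absorbed into the bounded remainder $R(x)$ and into the $(1+\varepsilon)$ margin. Since $\delta$ and $Q$ are themselves controlled by $(n,\lambda,i)$ via the harmonic radius bound, the resulting constant is of the form $C(n,\lambda,i)$, uniform in $j$, completing the proof.
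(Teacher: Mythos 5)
Your overall skeleton --- the representation formula $|u|\le\int_{B_\delta(x_j)}|\nabla_y G(x,y)||\nabla u(y)|\,dV_g(y)$, the uniform kernel bound of Theorem~\ref{Green}, and a reduction to an Adams-type estimate for the resulting potential --- is the same as the paper's, which proves Theorem~\ref{local} through the equivalent potential form Theorem~\ref{equiv theorem} by Adams' method (the proof being that of Theorem~3.1 in \cite{LiLu-AIM}). But your execution has a genuine gap at the central step. After splitting $|u|\le v+R$ with $\|R\|_\infty\le C_1$ and applying $(a+b)^{n/(n-1)}\le(1+\varepsilon)a^{n/(n-1)}+C_\varepsilon b^{n/(n-1)}$, you need $\sup_{\|\nabla u\|_n\le1}\int_{B_\delta}\phi_n\bigl(\alpha_n(1+\varepsilon)\,v^{n/(n-1)}\bigr)\,dV_g<\infty$ for some fixed $\varepsilon>0$, and you assert this holds because ``$(1+\varepsilon)Q^{n-1}$ stays strictly below the threshold where sharpness breaks.'' There is no such room: the threshold is exactly at factor $1$. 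The constant $\alpha_n$ (equivalently $n/\omega_{n-1}$ in the potential form) is sharp, so for \emph{every} $\varepsilon>0$ this supercritical supremum is infinite --- Moser's concentrating test functions $u_k$ satisfy $|u_k|\le v_k$ pointwise by the very representation formula you start from, and $\int\phi_n(\alpha_n(1+\varepsilon)|u_k|^{n/(n-1)})\to\infty$. The same objection applies to the multiplicative losses you incur by replacing $d(x,y)$, $dV_g$ and $|\nabla u|_g$ by their Euclidean counterparts in harmonic coordinates: any uniform factor $Q^{\mathrm{power}}>1$ on the kernel becomes a factor $>1$ on the exponential constant, which is fatal; and sending $Q\to1$ cannot rescue the argument because the fixed $\varepsilon$ from the remainder-splitting already exceeds the critical constant.

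This is precisely why the paper does not peel off the lower-order term. It keeps the whole kernel bound $\frac{1}{\omega_{n-1}}d(x,y)^{1-n}\bigl(1+Cd(x,y)\bigr)$ as the kernel of the operator $T$ in Theorem~\ref{equiv theorem}, works intrinsically with the geodesic distance and $dV_g$ (using Lemma~\ref{Singularity Lemma} and the Bishop/G\"unther comparisons rather than a Euclidean comparison with multiplicative loss), and runs Adams' argument (O'Neil's rearrangement inequality followed by the one-dimensional Adams--Garsia lemma). In that framework the correction $Cd(x,y)^{2-n}$ and the curvature corrections to volumes of small geodesic balls appear only as lower-order \emph{additive} perturbations of the rearranged kernel, of the type $t^{-(n-1)/n}(1+O(t^{1/n}))$, which the one-dimensional lemma absorbs into the constant $C$ with no loss in the exponent $n/\omega_{n-1}$. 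So your remainder $R$ must be carried inside the potential estimate (as part of the kernel) rather than removed by an $\varepsilon$-Young-type inequality before exponentiating at the critical constant; as written, the key intermediate bound in your proposal is false, and the argument does not close.
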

	 We use Adams' method to obtain an equivalent version of Theorem \ref{local}:
	
	\begin{thm}\label{equiv theorem}
		Let $B_\delta (x_j)$ be defined as previous and $T$ be the convolution operator such that 
		$$
		Tu (x) = \int_{B_\delta (x_j)} |\nabla G(x , y)| |\nabla u(y)| dV_g (y).
		$$
		Then there exists a constant $C = C( n , \lambda , i,  Q , \textit{Vol} (B_\delta(x_j)))$, which is increasing with respect to $\textit{Vol} (B_\delta(x_j))$, such that 
		
		$$
		\sup_{u \in L^n (B_\delta(x_j)) , ||u||_n \leq 1} \int_{B_\delta (x_j)} \phi_n \left( \frac{n}{\omega_{n-1}} |Tu|^{\frac{n}{n-1}} \right) dV_g \leq C.
		$$
	\end{thm}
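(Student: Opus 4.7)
\medskip
\noindent\textbf{Proof plan.}
The plan is to reduce Theorem~\ref{equiv theorem} to Adams' classical exponential inequality for the Euclidean Riesz potential of order one, carried through the harmonic coordinate chart. By Theorem~\ref{Green}, uniformly on $B_\delta(x_j)\times B_\delta(x_j)$ we have
$$
|\nabla_y G(x,y)|\leq \frac{1+C\delta}{\omega_{n-1}}\,d(x,y)^{1-n},
$$
so $|Tu|\leq (1+C\delta)\,\mathcal{T}u$, where $\mathcal{T}$ denotes convolution against the pure Riesz-type kernel $\frac{1}{\omega_{n-1}}d(x,y)^{1-n}$. It therefore suffices to establish the Adams-type exponential bound for $\mathcal{T}u$; the multiplicative factor $(1+C\delta)^{n/(n-1)}$ and the lower-order $d(x,y)^{2-n}$ remainder (controlled via Lemma~\ref{Singularity Lemma}) are then absorbed into the final constant.

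Next, I would pull back to the harmonic chart and apply the Adams-O'Neil rearrangement method. Because $Q^{-1}\delta_{ij}\leq g_{ij}\leq Q\delta_{ij}$ on $B_\delta(x_j)$, both the geodesic distance and the Riemannian volume element are comparable to their Euclidean counterparts, with constants depending only on $n$ and $Q$, so $\mathcal{T}u$ is pointwise dominated in the chart by a Euclidean operator of the form $c_{n,Q}\int|x-y|^{1-n}\,\widetilde u(y)\,dy$ acting on the pull-back $\widetilde u$ with $\|\widetilde u\|_n\leq C_Q$. The level sets of $|x-\cdot|^{1-n}$ are Euclidean balls, so its nonincreasing rearrangement obeys the sharp bound $(|x-\cdot|^{1-n})^*(t)\leq (\omega_{n-1}/n)^{(n-1)/n}\,t^{-(n-1)/n}$. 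Combining this with O'Neil's rearrangement inequality for convolutions and the one-dimensional Moser-Adams lemma then yields the finite Orlicz bound
$$
\int_{B_\delta(x_j)}\phi_n\!\left(\tfrac{n}{\omega_{n-1}}\,|\mathcal{T}u|^{n/(n-1)}\right) dV_g\leq C\bigl(n,Q,\mathrm{Vol}(B_\delta(x_j))\bigr)
$$
for every $\|u\|_n\leq 1$, which together with the domination $|Tu|\leq(1+C\delta)\,\mathcal{T}u$ gives the stated inequality after adjusting the constant.

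The main obstacle will be the bookkeeping of geometric factors: the distortion $Q^{\pm 1}$ from the harmonic chart enters both the kernel and the measure and is raised to the power $n/(n-1)$ in the exponent, while the $(1+C\delta)$ factor from Theorem~\ref{Green} and the $d^{2-n}$ remainder require separate control through Lemma~\ref{Singularity Lemma} together with the finiteness of $\mathrm{Vol}(B_\delta(x_j))$. Since Theorem~\ref{equiv theorem} asks only for finiteness of the exponential integral (with a constant allowed to depend monotonically on the volume), these distortions are all absorbed into $C$, and no separate capacitary or concentration-compactness argument is needed to close the proof.
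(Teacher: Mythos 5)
Your overall route --- the Green representation from Theorem \ref{Green} followed by Adams' rearrangement machinery (O'Neil's inequality plus the one-dimensional Adams lemma) in the harmonic chart --- is the same one the paper intends, since its proof of Theorem \ref{equiv theorem} is exactly the argument of Theorem 3.1 in \cite{LiLu-AIM} run with the uniform Green estimate. The gap is in how you dispose of the error factors. The constant $\frac{n}{\omega_{n-1}}$ is meant to be the \emph{critical} Adams constant for the operator $T$: it is precisely what reproduces the sharp $\alpha_n$ of Theorem \ref{local} through the pointwise bound $|u|\le T(|\grad u|)$, and at the critical constant there is no slack at all. Consequently the step ``$|Tu|\le (1+C\delta)\,\mathcal{T}u$, and the multiplicative factor $(1+C\delta)^{n/(n-1)}$ is absorbed into the final constant'' does not work: it replaces the model functional at the critical constant by the same functional at the strictly supercritical constant $\frac{n}{\omega_{n-1}}(1+C\delta)^{n/(n-1)}$, whose supremum over $\|u\|_n\le 1$ is infinite by the very sharpness that the theorem is designed to capture. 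The same objection applies, more seriously, to the chart distortions: dominating kernel and measure by Euclidean ones up to factors $c_{n,Q}$, $C_Q$ and then exponentiating raises those factors to the power $\frac{n}{n-1}$ inside $\phi_n$, again overshooting the critical threshold. ``Only finiteness is asked'' is not a soft requirement here, because the exponent is sharp; this is exactly the point you flag as the ``main obstacle'' and then set aside.

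What the cited proof actually does, and what you would need to do, is keep every correction \emph{additive} rather than multiplicative. Write $|\nabla_y G(x,y)|\le \frac{1}{\omega_{n-1}}d(x,y)^{1-n}+\frac{C}{\omega_{n-1}}d(x,y)^{2-n}$; the remainder kernel satisfies $\sup_x\|d(x,\cdot)^{2-n}\|_{L^{n/(n-1)}(B_\delta(x_j))}<\infty$ (this is H\"older together with the volume comparison, not Lemma \ref{Singularity Lemma}, which is the convolution estimate used for the parametrix series), so its contribution to the potential is a uniformly bounded additive term. For the leading kernel one must not majorize it by a Euclidean kernel times a constant; instead one estimates the $dV_g$-measure of its level sets and shows it matches the critical profile $\frac{\omega_{n-1}}{n}s^{-n/(n-1)}$ up to additive lower-order errors, which is where the quantitative closeness $Q^{-1}\delta_{ij}\le g_{ij}\le Q\delta_{ij}$ with $Q$ near $1$ (rather than mere two-sided comparability with a fixed large constant) is used. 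These additive errors and the bounded remainder potential are exactly what O'Neil's inequality and the (perturbed) one-dimensional Adams lemma are built to tolerate, and they are also where the monotone dependence on $\mathrm{Vol}(B_\delta(x_j))$ enters. As written, your argument proves a subcritical variant of the statement but not the stated inequality at $\frac{n}{\omega_{n-1}}$.
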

	
	With the help of Green's function estimates in the previous section, the proof of Theorem \ref{equiv theorem} is exactly the same as that of Theorem 3.1 in \cite{LiLu-AIM}. The parameters $\lambda, i$ do not explicitly appear in the estimates, but will determine $Q$ and $r_H(\alpha , 0 , Q)$, which gives the  bounds of $\delta$.  This shows the constants in the Moser-Trudinger inequality on any geodesic balls only 
	depend on the radius $\delta$ which in turn depends on $\lambda$ and $i$.

 Proof of Theorem \ref{maintheorem} under sectional curvature upper bounded condition then follows from the uniform Green function estimates.

	\begin{rem}
	   There is another version of harmonic coordinates one can use to prove our main theorem. In \cite{AndersonCheeger}, it reads as a collection of finitely overlapping charts $\{ U_j , F_j \}$, where $F_j : U_j \to \mathbb{R}^n$ is of the form $U_j = F_j^{-1} (B_{r_H}(0))$. In such choice of harmonic coordinates, the a priori estimate of $F(x,y)$ will be significantly simplified since the Dirichlet problem is taken on Euclidean balls whose boundary is smooth and can be uniformly finitely covered. On the other hand, the domains $\{ U_j \}$ share the same Gromov's covering properties. Noticing that $\textit{Vol} (U_j) = \int_{U_j} dV = \int_{B_{r_H} (0)} \sqrt{|g|} dx \sim C(n , Q) r_H^n$, it is still possible to perform the local to global argument using this collection of charts, instead of geodesic balls.
	\end{rem}

\bibliographystyle{plain}

\end{document}